\documentclass{article}

\usepackage{amsmath}
\usepackage{amssymb}
\usepackage{amsthm}
\usepackage{bussproofs}

\usepackage{cleveref}

\theoremstyle{definition}
\newtheorem*{definition}{Definition}
\newtheorem*{theorem}{Theorem}
\newtheorem*{lemma}{Lemma}
\newtheorem*{corollary}{Corollary}

\newtheorem*{remark}{Remark}

\newcommand{\JConst}{\mathsf{JConst}}
\newcommand{\JVar}{\mathsf{JVar}}
\newcommand{\GTm}{\mathsf{GTm}}
\newcommand{\Tm}{\mathsf{Tm}}
\newcommand{\Prop}{\mathsf{Prop}}
\newcommand{\Fml}{\mathsf{Fml}}
\newcommand{\FmlS}{\mathsf{Fml_S}}
\newcommand{\axiom}[1]{\textbf{#1}}

\newcommand{\K}{\mathsf{K}}
\newcommand{\GK}{\mathsf{GK}}
\newcommand{\SE}{\mathsf{SE}}
\newcommand{\CL}{\mathsf{CL}}
\newcommand{\true}{\mathbb{T}}
\newcommand{\false}{\mathbb{F}}

\newcommand{\FmlSTm}{\Fml_{S_\Tm}}

\author{Michael Baur \and Thomas Studer}
\title{Semirings of Evidence}
\date{Institute of Computer Science\\
University of Bern,
Switzerland}

\begin{document}
\maketitle

\begin{abstract}
In traditional justification logic, evidence terms have the syntactic form of polynomials, but they are not equipped with the corresponding algebraic structure.
We present a novel semantic approach to justification logic that models evidence by a semiring.  Hence justification terms can be interpreted as polynomial functions on that semiring. This provides an adequate semantics for evidence terms and clarifies the role of variables in justification logic. Moreover, the algebraic structure makes it possible to compute with evidence. Depending on the chosen semiring this can be used to model trust, probabilities, cost, etc. Last but not least the semiring approach seems promising for obtaining a realization procedure for modal fixed point logics.
\end{abstract}

\section{Introduction}

Justification logic replaces the $\Box$-operator from modal logic with explicit evidence terms~\cite{Art01BSL,artemovFittingBook,justificationLogic2019}. That is, instead of formulas $\Box A$, justification logic features formulas $t:A$, where $t$ encodes evidence for $A$. Depending on the context, the term $t$ may represent a formal proof of $A$~\cite{Art01BSL,weakArithm} or stand for an informal 
justification (like direct observation, public announcement, private communication, and so on) for an agent's knowledge or belief of~$A$.
With the introduction of possible world models, justification logic has become an important tool to discuss and analyze epistemic situations~\cite{Art06TCS,Art08RSL,ArtKuz14APALnonote,Baltag201449,Stu13JSL}.

The terms of justification logic represent explicit evidence for an agent's belief or knowledge.  Within justification logic, we can reason  about this evidence. For instance, we can track different pieces of evidence pertaining to the same fact, which is essential for distinguishing between factive and non-factive justifications. This is applied nicely in Artemov's analysis of Russel's Prime Minister example~\cite{Art10LNCS}. Evidence terms can also represent the reasoning process of an agent. Therefore, agents represented by justification logic systems are not logically omniscient according to certain complexity based logical omniscience tests~\cite{ArtKuz06CSL,ArtKuz09TARK,ArtKuz14APALnonote}.

In traditional justification logic, terms are built using the binary operations~$+$ (called sum) and $\cdot$ (called application) and maybe other additional operations. Thus terms have the syntactic form of polynomials and are, in the context of the Logic of Proofs, indeed called proof polynomials.

This syntactic structure of polynomials is essentially used in the proof of realization, which provides a procedure that, given a theorem of a modal logic, constructs a theorem of the corresponding justification logic by replacing each occurrence of $\Box$ with an adequate justification term~\cite{Art01BSL}.

The main contribution of the present paper is to look at the syntactic structure of justification terms \emph{algebraically}, that is, we interpret justifications by a semiring structure. The motivation for this is threefold:
\begin{enumerate}
\item It provides an appropriate semantics for variables in evidence terms.
It was always the idea in justification logic that terms with variables justify derivations from assumptions.  The variables represent the input values, i.e.,  (arbitrary) proofs of the assumptions~\cite{Art01BSL}. But this was not properly reflected in the semantics where usually variables are treated like constants: to each term (no matter whether it contains variables or not) some set of formulas is assigned.  In our semiring semantics, ground terms (i.e.~terms not containing variables) are interpreted as elements of a semiring and terms with variables are interpreted as polynomial functions on the given semiring of justifications. Thus terms with variables are adequately represented and the role of variables is clarified.

\item The algebraic structure of terms makes it possible to compute with justifications.
Depending on the choice of the semiring, we can use the term structure to model levels of trust (Viterbi semiring), costs of obtaining knowledge (tropical semiring), probabilistic evidence (powerset semiring), fuzzy justifications (\L{}ukasiewicz semiring), and so on.
\item Considering $\omega$-continuous semirings, i.e.~semirings in which certain fixed points exist, may provide a solution to the problem of realizing modal fixed point logics like the logic of common knowledge. In these logics, some modal operators can be interpreted as fixed points of monotone operations. It seems likely that their realizations also should be fixed points of certain operations on semirings.
\end{enumerate}

\subsection*{Related Work}

Our approach is heavily inspired by the semiring approach for provenance in database systems~\cite{Green:2007}. There the idea is to label database tuples and to propagate expressions in order to annotate intermediate data and final outputs. One can then evaluate the provenance expressions in various semirings to obtain information about levels of trust, data prices,  required clearance levels, confidence scores, probability distributions, update propagation, and many more~\cite{Green:2017}.

This semiring framework has been adapted to many different query languages and data models. The core theoretical work of those approaches includes results on query containment, the construction of semirings, and fixed points~\cite{Amsterdamer:2011PIG,Amsterdamer:2011,Deutch:2015,Foster:2008,Geerts:2016,Green:2009:CCQ,Green2011}.

There are only few systems available where justification terms are equipped with additional structure. Two prominent examples are based on $\lambda$-terms (in contrast to the combinatory terms of the Logic of Proofs). The reflective lambda calculus~\cite{AltArt01PTCS} includes reduction rules on proof terms.  The intensional lambda calculus~\cite{ArtBon07LFCS} has axioms for evidence equality and also features a reduction relation on the terms. Another example is Krupski's recent work on sharp justification logics~\cite{Krupski2020}.

The present paper extends and corrects the results presented in~\cite{CLAR}. We extend our previous work by two realization theorems that establish the precise relationship between 
semirings of evidence and traditional epistemic modal logic, see Section~\ref{s:realization}. 
Moreover we slighlty changed the axiomatic system  in order to fix a mistake in the completeness proof.

\section{The Syntax of $\SE$}

We begin by defining the justification language as usual.
That is, we use a countable (infinite) set of constants $\JConst = \{0,1,c_1,c_2,...\}$ that includes two distinguished elements 0 and 1. Further we have a countable (infinite) set of variables $\JVar = \{x_1,x_2,...\}$.

\begin{definition}[Justification Term]
Justification terms are 
\[
c \in \JConst,\ x \in \JVar,\ s \cdot t,\ \text{ and }\ s+t, 
\]
where $s$ and $t$ are justification terms. The set of all justification terms is called\/ $\Tm$. A justification term that does not contain variables is called \emph{ground term}. $\GTm$ denotes the set of all ground terms. 
\end{definition}
Often we write only \emph{term} for \emph{justification term}.
Further, we need a countable set of atomic propostions
$\Prop = \{P_1,P_2,...\}$.

\begin{definition}[Formulas]
Formulas are $\perp$, $P$, $A \to B$ and $t:A$, where $t$ is a justification term, $P \in \Prop$ and $A$, $B$ are formulas. The set of all formulas is called $\Fml$.
\end{definition}
The remaining logical connectives $\lnot$, $\land$, $\lor$, and $\leftrightarrow$ are abbreviations as usual, e.g., $\lnot A$ stands for $A \to \bot$.

We will make use of substitutions to present the axioms of the logic~$\SE$.
Given a formula $A$ we write $A[w / t]$ for the result of simultaneously replacing all occurrences of the variable~$w$ in $A$ with the term~$t$. For instance, if $A$ is the formula $u:r\cdot w :B$, then 
$A[w/s+t]$ denotes the formula $u:r\cdot (s+t) :B$. For substituting all variables simultaneously we use a function $\sigma: \JVar \to \Tm$ by defining $A\sigma := A[x_1/y_1]...[x_n/y_n][y_1/\sigma(x_1)]...[y_n/\sigma(x_n)]$, where $x_1,...,x_n$ are the variables occurring in $A$ and the $y_i$ are fresh variables. We will use the same notations for substitutions in terms.

Now we can define a deductive system for the logic~$\SE$ about the semirings of evidence. It consists of the following axioms, where $w,x,y,z$ are variables and $A,B$ formulas.
\pagebreak

\noindent
The axioms of  $\SE$ are:\nopagebreak
\par\medskip
\nopagebreak
\begin{tabular}{ll}
\axiom{CL}\phantom{mm} & Every instance of a
propositional tautology \\
\axiom{j} & $x:(A \to B) \to (y:A \to x \cdot y:B)$\\
\axiom{j+} & $x:A \wedge y:A \to (x+y):A$\\
\axiom{a+} & $A[w/(x+y)+z] \to A[w/x+(y+z)]$\\
\axiom{c+} & $A[w/x+y] \to A[w/y+x]$\\
\axiom{0+} & $A[w/x+0] \leftrightarrow  A[w/x]$\\
\axiom{am} & $A[w/(x \cdot y) \cdot z] \leftrightarrow A[w/x \cdot (y \cdot z)]$\\
\axiom{a0} & $A[w/ x \cdot 0] \leftrightarrow A[w/0]$ \quad and \quad $A[w/ 0 \cdot x] \leftrightarrow A[w/0]$\\
\axiom{a1} & $ A[w/x \cdot 1] \leftrightarrow A[w/x]$ \quad  and \quad $ A[w/1 \cdot x] \leftrightarrow A[w/x]$\\
\axiom{dl} & $A[w/x \cdot (y+z)] \leftrightarrow A[w/x \cdot y + x \cdot z]$\\
\axiom{dr} & $A[w/(y+z) \cdot x] \leftrightarrow A[w/y \cdot x + z \cdot x]$
\end{tabular}
\par\medskip
\noindent
The rules of $\SE$ are:
\[
\text{\axiom{MP}} \quad \dfrac{A \hspace*{6mm} A \to B}{B}
\]
and 
\[
\text{\axiom{jv}} \quad \dfrac{A}{A[x/t]}
\]

The axiom schemes \axiom{a+}, \axiom{c+}, \axiom{0+}, \axiom{am}, \axiom{a0}, \axiom{a1}, \axiom{dl} and \axiom{dr} are called semiring axioms.
In the axiom scheme \axiom{j+}, we find
an important difference to traditional justification logic  where $\vee$ is used instead of $\wedge$, see also Section~\ref{s:realization} later.
The idea for $\axiom{j+}$ is to read $s+t:A$ as \emph{both $s$ and $t$ justify $A$.} This is useful, e.g., in the context of uncertain justifications where having two justifications is better than just having one.
The rule~\axiom{jv} shows the role of variables in $\SE$, which differs from traditional justification logic. In our approach a formula $A(x)$ being valid means that $A(x)$ is valid \emph{for all justifications~$x$}.

Now we show by an example how the semiring axioms work. Assume a formula $A$ contains an occurrence of $s+t$ (it may occur anywhere, even as a subterm of some other term). Starting from $A$, we want to derive the formula $B$, which is the same as $A$ except that the occurrence of $s+t$ is replaced by $t+s$.  So we let $C$ be the formula $A$ with this occurrence $s+t$ being replaced by a variable $x$ that doesn't occur in $A$. Now $C[x/s+t] \to C[x/t+s]$ is derived from an instance of the axiom scheme \textbf{c+} by \axiom{jv} and it is the same as $A \to B$.

Let us mention two immediate consequences of our axioms. First a version of axiom \axiom{0+} with  $x+0$ is replaced by $0+x$ is provable. Second, the direction from right to left in axiom \axiom{a+} is also provable.
\begin{lemma}\label{upl1}
The following formulas are derivable in $\SE$:
\[
A[w/0+x] \leftrightarrow A[w/x] 
\qquad\text{and}\qquad 
A[w/x+(y+z)] \leftrightarrow A[w/(x+y)+z].
\]
\end{lemma}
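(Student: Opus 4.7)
The plan is to derive both equivalences as chains of instances of the commutativity and associativity/identity axioms, glued together by transitivity of implication (available from \axiom{CL} and \axiom{MP}) and the substitution rule \axiom{jv}. A useful preliminary observation is that \axiom{c+}, although stated as a single implication, yields the converse by \axiom{jv}: instantiating its two variables in the opposite order gives the mirror-image instance $A[w/y+x] \to A[w/x+y]$, so in practice \axiom{c+} provides a biconditional for any pair of summands.

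For the first equivalence $A[w/0+x] \leftrightarrow A[w/x]$, I would use \axiom{c+} (via \axiom{jv}, in the manner above) to obtain $A[w/0+x] \leftrightarrow A[w/x+0]$, and then chain this with axiom \axiom{0+}, which directly gives $A[w/x+0] \leftrightarrow A[w/x]$.

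For the second equivalence $A[w/x+(y+z)] \leftrightarrow A[w/(x+y)+z]$, the direction right-to-left is an immediate instance of \axiom{a+}. The other direction is the real content, and I would prove it by a five-step rearrangement that only uses the given direction of \axiom{a+} together with outer applications of \axiom{c+}:
\[
x+(y+z) \;\to\; (y+z)+x \;\to\; y+(z+x) \;\to\; (z+x)+y \;\to\; z+(x+y) \;\to\; (x+y)+z.
\]
Each step here is obtained from an instance of \axiom{c+} (applied at the outermost $+$) or \axiom{a+} (with its bound variables instantiated via \axiom{jv} to match the desired pattern, as illustrated in the example following the axiom list). Concatenating the resulting implications via transitivity of $\to$ yields the converse.

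The only delicate point I anticipate is the bookkeeping around \axiom{jv}: to turn the schematic axioms into the specific instances needed, one must choose fresh variable names so that the substitutions land exactly at the intended positions inside $A$ (and not, for example, at stray occurrences of the schematic variables $x,y,z$). Apart from this purely administrative issue, nothing in the argument is genuinely difficult; once the five-link chain above is written down, the result follows mechanically.
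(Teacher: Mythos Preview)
Your proposal is correct. The paper itself does not give a proof of this lemma; it merely states beforehand that both claims are ``immediate consequences of our axioms'' and leaves the details to the reader. Your argument---using \axiom{c+} (which, as you note, is effectively a biconditional once \axiom{jv} is available) together with \axiom{0+} for the first claim, and the five-link chain of outer \axiom{c+} and \axiom{a+} steps for the nontrivial direction of the second claim---is exactly the kind of routine verification the authors have in mind, and it is sound. The caution you raise about fresh variables when applying \axiom{jv} is appropriate and matches the paper's own worked example following the axiom list.
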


A theory is just any set of formulas.

\begin{definition}[Theory]
A theory $T$ is a subset of\/ $\Fml$. We use $T \vdash_{\SE} F$ to express that $F$ is derivable from $T$ in $\SE$. 
\end{definition}
Often we drop the subscript $_{\SE}$ in $\vdash_{\SE}$ when it is clear from the context. Moreover, we use $\vdash_{\CL}$ for the derivability relation in classical propositional logic.

A theory can compensate for the absence of constant specifications.
Usually, systems of justification logic are parametrized by a constant specification, i.e.,~a set containing pairs of constants and axioms. One then has a rule saying that a formula $c:A$ is derivable if $(c,A)$ is an element of the constant specification.
Here we do not adopt this approach but simply use a theory that includes $c:A$.
 
 \begin{definition}
	A theory $T$ is called axiomatically appropriate if
	\begin{enumerate}
		\item for all axioms $A$ there exists $c \in \JConst$ with $c:A \in T$
		\item for all $B \in T$ there exists $c \in \JConst$ with $c:B \in T$.
	\end{enumerate}
\end{definition}

Intuitively, in an axiomatically appropriate theory, all axioms have a justification and also all elements of the theory have a justification.
Using axiomatically appropriate theories, we get an analogue of modal necessitation in $\SE$.

\begin{lemma}[Internalization]
Let $T$ be an axiomatically appropriate theory. For any formula $A$, there exists a ground term $t$ such that
\[
T \vdash A  \quad\text{implies}\quad T \vdash t:A.
\]
\end{lemma}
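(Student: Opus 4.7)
The plan is to proceed by induction on the length of an $\SE$-derivation of $A$ from $T$, producing in each case a \emph{ground} term witnessing internalization. For the two base cases, axiomatic appropriateness supplies the required constant directly: if $A$ is an instance of an axiom of $\SE$, clause~(1) of the definition gives a $c \in \JConst$ with $c:A \in T$ and we set $t := c$; if $A \in T$, clause~(2) does the same. These cases cover all the semiring axioms uniformly, since appropriateness is stated for axiom instances rather than schemes.

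For the $\axiom{MP}$ step, where $A$ is obtained from $B$ and $B \to A$, I would invoke the induction hypothesis to get ground terms $u$ and $s$ with $T \vdash u:(B \to A)$ and $T \vdash s:B$. Then I would take the appropriate instance $x:(B \to A) \to (y:B \to x \cdot y:A)$ of axiom $\axiom{j}$, apply $\axiom{jv}$ twice to substitute $u$ for $x$ and $s$ for $y$, and finish with two applications of $\axiom{MP}$ to obtain $T \vdash u \cdot s : A$. The witness $u \cdot s$ is ground because $u$ and $s$ are, so we set $t := u \cdot s$.

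The $\axiom{jv}$ step, where $A = B[x/r]$ is derived from $B$, is the case where it genuinely matters that the inductive witness is ground. By the induction hypothesis there exists a ground term $s$ with $T \vdash s:B$. Applying $\axiom{jv}$ with the same substitution $[x/r]$ to the formula $s:B$ yields $T \vdash s[x/r] : B[x/r]$; since $s$ contains no variables, $s[x/r] = s$, and $B[x/r] = A$, so the derived formula is exactly $s:A$, and we may take $t := s$. I expect this to be the only step where any care is needed: one has to strengthen the stated property to require ground witnesses precisely so that the substitution in this case cannot damage the term we are carrying through the induction. Once that invariant is in place, every case is routine.
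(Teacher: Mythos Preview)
Your proof is correct and follows the same approach as the paper: induction on the derivation, with the base cases handled by axiomatic appropriateness, the \axiom{MP} case via axiom \axiom{j}, and the \axiom{jv} case by substituting into $s:B$ and using that $s$ is ground so the substitution leaves $s$ unchanged. The only difference is cosmetic---you spell out the \axiom{jv} instantiations of \axiom{j} and the role of groundness a bit more explicitly than the paper does.
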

\begin{proof}
	Induction on a derivation of $A$.
	\begin{enumerate}
		\item $A \in T$ or $A$ is an axiom: $t$ exists by the definition of an axiomatically appropriate theory.
		\item If $A$ is obtained by \textbf{MP} from $B \to A$ and $B$, then by I.H.~exist $t_1,t_2 \in \GTm$ such that $T \vdash_{\SE} t_1:(B \to A)$ and $T \vdash_{\SE} t_2:B$. Thus $T \vdash_{\SE} t:A$ holds for $t=t_1 \cdot t_2$.
		\item If $A[x/s]$ is obtained by \textbf{jv} from $A$, then by I.H.~exists $t \in \GTm$ such that $T \vdash_{\SE} t:A$. Finally, \textbf{jv} implies $T \vdash_{\SE} t:A[x/s]$.
		\qedhere
	\end{enumerate}
\end{proof}

\begin{remark}[Substitution]
In order to replace variables with terms, we do not need any properties of the theory $T$. In particular, we do not require it to be schematic, see Definition~\ref{def:schematic:1}. The implication \[T \vdash_{\SE} A \quad\text{implies}\quad T \vdash_{\SE} A[x/t]\] follows directly from rule \textbf{jv}.
\end{remark}
 
 Next we show that $\SE$ is a conservative extension of classical propositional logic, which implies consistency of  $\SE$.
 
\begin{theorem}[Conservativity]
The logic\/ $\SE$ is a conservative extension of classical propositional logic, $\CL$, i.e., for all formulas $A$ of the language of\/ $\CL$, we have
\[
\vdash_{\SE} A \quad\text{if{f}}\quad \vdash_{\CL} A.
\]
\end{theorem}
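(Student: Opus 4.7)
The \emph{if} direction is immediate, since every classical tautology is an instance of \axiom{CL} and $\SE$ has \axiom{MP}, so any $\CL$-derivation transfers directly into $\SE$. For the converse direction I plan to use a forgetful projection that collapses all justification structure.

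Define $(\cdot)^\circ : \Fml \to \Fml$ (writing $\top$ as an abbreviation for $\bot \to \bot$) by $\bot^\circ := \bot$, $P^\circ := P$ for $P \in \Prop$, $(A \to B)^\circ := A^\circ \to B^\circ$, and crucially $(t:A)^\circ := \top$. The key observation, proved by a straightforward structural induction on $A$, is that for any term variable $w$ and any terms $s_1, s_2$ one has $A[w/s_1]^\circ = A[w/s_2]^\circ$. This holds because every occurrence of a term variable in $A$ lies inside some subformula of the form $t:B$, and every such subformula is erased to $\top$ regardless of which term is substituted.

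I would then show by induction on the length of a derivation that $\vdash_\SE A$ implies $\vdash_\CL A^\circ$. Instances of \axiom{CL} remain propositional tautologies, since $(\cdot)^\circ$ commutes with $\to$ and $\bot$, and hence with all defined connectives. Axiom \axiom{j} translates to $\top \to (\top \to \top)$ and \axiom{j+} to $\top \land \top \to \top$, both tautologies. Each semiring axiom has the shape $A[w/s_1] \star A[w/s_2]$ with $\star \in \{\to, \leftrightarrow\}$, so by the key observation its translation is of the form $\varphi \star \varphi$ for some $\varphi$, again a tautology. The rule \axiom{MP} is preserved because $(\cdot)^\circ$ commutes with $\to$, and the rule \axiom{jv} is preserved because $A^\circ = A[x/t]^\circ$ by the same observation. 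Since $A^\circ = A$ whenever $A$ contains no subformula of the form $t:B$, this yields $\vdash_\SE A$ implies $\vdash_\CL A$ for $A$ in the language of $\CL$.

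The only substantive obstacle is the key observation about insensitivity of $(\cdot)^\circ$ to substitution for term variables; once it is in place, the verification of each axiom and rule is essentially mechanical, and the conservativity statement follows directly.
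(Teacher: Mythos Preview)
Your proof is correct and follows essentially the same strategy as the paper: define a forgetful translation into propositional logic, check that every axiom translates to a tautology and that the rules are preserved, and conclude by induction on derivations. The only difference is cosmetic --- you set $(t{:}A)^\circ := \top$, whereas the paper sets $(t{:}A)^\circ := A^\circ$ (dropping only the prefix $t{:}$); both choices make the substitution-insensitivity observation hold and both make all $\SE$-axioms project to tautologies.
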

\begin{proof}
The claim from right to left is trivial as $\SE$ extends $\CL$. For the direction from left to right, we consider a mapping $^\circ$ from $\Fml$ to formulas of $\CL$ that simply drops all occurrences of $t:$. In particular, for any $\CL$-formula $A$, we have $A^\circ = A$.
Now it is easy to prove by induction on the length of $\SE$ derivations that for all $A \in \Fml$,
\[
\vdash_{\SE} A \quad\text{implies}\quad \vdash_{\CL} A^\circ.
\]
Simply observe that for any axiom $A$ of $\SE$, $A^\circ$ is a propositional tautology, and that the rules of $\SE$ respect the  $^\circ$-translation.
\end{proof}

Now consistency of $\SE$ follows immediately.

\begin{corollary}[Consistency of $\SE$]
The logic $\SE$ is consistent.
\end{corollary}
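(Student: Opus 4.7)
The plan is to derive consistency of $\SE$ as an immediate corollary of the Conservativity Theorem just established. Recall that a logic is consistent precisely when $\bot$ is not derivable in it.

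Concretely, I would proceed by contradiction. Suppose $\vdash_{\SE} \bot$. Since $\bot$ is a formula in the language of $\CL$, the Conservativity Theorem applies and yields $\vdash_{\CL} \bot^\circ$. But $\bot$ contains no subformula of the form $t:A$, so $\bot^\circ = \bot$, and therefore $\vdash_{\CL} \bot$. This contradicts the well-known consistency of classical propositional logic, completing the argument.

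There is essentially no obstacle here: all the work has already been done in the Conservativity Theorem, and the present corollary is just the instantiation of that theorem at the formula $\bot$, combined with the fact that the translation $^\circ$ acts as the identity on $\CL$-formulas. The only thing worth being careful about is to note explicitly that $\bot$ belongs to the language of $\CL$ so that the hypothesis of the Conservativity Theorem genuinely applies.
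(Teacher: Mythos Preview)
Your proposal is correct and matches the paper's own proof essentially line for line: assume $\vdash_{\SE} \bot$, invoke conservativity to get $\vdash_{\CL} \bot$, and contradict the consistency of classical propositional logic. The only cosmetic difference is that the Conservativity Theorem as stated already applies directly to $\bot$ as a $\CL$-formula, so the detour through $\bot^\circ$ is not needed (the $^\circ$-translation appears only in the \emph{proof} of conservativity, not its statement); but this does not affect correctness.
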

\begin{proof}
Assume towards contradiction that $\vdash_{\SE} \perp$. By conservativity of $\SE$ over $\CL$ we get $\vdash_{\CL} \perp$, which is a contradiction. 
\end{proof}

\begin{remark}\label{rem:dedTH:1}
The deduction theorem does not hold in $\SE$. This is due to possible occurrences of variables. For example $\{x:P\} \vdash 0:P$ says that if every term justifies $P$ then also $0$ justifies $P$, which is trivial. However $\vdash x:P \to 0:P$ is not valid because it can be shown that $\nvdash 1:P \to 0:P$.
\end{remark}

\section{The Semantics of $\SE$}

Our semantics of $\SE$ is similar to traditional semantics for justification logic in the sense that $t:A$ is given meaning by making use of an evidence relation.
Usually, this evidence relation assigns to each term a set of formulas, i.e.~the formulas that are justified by the term.
The novelty of our approach is that the evidence relation maps elements of a semiring to sets of formulas and terms are interpreted by the elements of this semiring.

\begin{definition}[Semiring]\label{d:semiring:1}
$K = (S, +, \cdot, 0, 1)$, where $S$ is the domain, is a semiring, if for all $a,b,c \in S$:
\begin{enumerate}
\item $(a+b)+c = a+(b+c)$, \quad $a+b=b+a$  \quad and \quad $a+0=0+a=a$ 
\item $(a\cdot b)\cdot c = a\cdot (b\cdot c)$  \quad and  \quad $a\cdot 1 = 1\cdot a = a$ 
\item $(a+b)\cdot c = a\cdot c + b\cdot c$  \quad and  \quad $c\cdot (a+b) = c\cdot a + c\cdot b$ 
\item $a \cdot 0 = 0 \cdot a = 0$
\end{enumerate}
\end{definition}

Thus, unlike in a ring, there is no inverse to $+$. We also do not require $\cdot$ to be commutative. 

Note that we use $+$ and $\cdot$ both as symbols in our language of justification logic and as operations in the semiring. It will always be clear from the context which of the two is meant.

For the following, assume we are given a semiring $K = (S, +, \cdot, 0, 1)$.
We use a function $I: \JConst \to S$ to map the constants of the language of~$\SE$ to the domain~$S$ of the semiring. We call this function $I$ an \emph{interpretation} if $I(0)=0$ and $I(1)=1$. 
We now extend $I$ to a homomorphism such that  $I: \Tm \to S[\JVar]$, where $S[\JVar]$ is the polynomial semiring in $\JVar$ over $S$ by setting:
\begin{enumerate}
\item $I(x):=x$ for variables $x$
\item $I(s+t) := I(s)+I(t)$ for terms $s,t$
\item $I(s\cdot t) := I(s) \cdot I(t)$ for terms $s,t$
\end{enumerate}
Let $K = (S, +, \cdot, 0, 1)$ be a semiring with domain $S$.
We define $\FmlS$ as the set of formulas where 
we use elements of $S$ instead of justification terms.
\begin{enumerate}
	\item $\perp \in \FmlS$
	\item $P \in \FmlS$, where $P \in \Prop$
	\item $A \to B \in \FmlS$, where $A \in \FmlS$ and $B \in \FmlS$
	\item $s:A \in \FmlS$, where $A \in \FmlS$ and $s \in S$
\end{enumerate}

\begin{definition}[Evidence relation]\label{d:er:1}
Let $S$ be the domain of a semiring. We call
$J \subseteq S \times \FmlS$ an \emph{evidence relation} if for all $s,t \in S$ and all $A,B \in \FmlS$:
\begin{enumerate}
\item $J(s, A \to B)$ and $J(t,A)$ imply $J(s\cdot t,B)$
\item $J(s,A)$ and $J(t,A)$ imply $J(s+t,A)$
\end{enumerate}
\end{definition}

\begin{definition}[Valuation]
A valuation $v$ is a function from $\JVar$ to $S$.
\end{definition}

The polynomial $I(t)$ can be viewed as a function $t_I: S^n \to S$, where $n$ is the number of variables that occur in $t$.
Hence, given an interpretation \mbox{$I: \JConst \to S$} and a valuation $v$, every $t \in \Tm$ can be mapped to an element $t_I(v(x_1),...,v(x_n))$ in $S$, which we denote by $t_I^v$. 
By abuse of notation, we only mention the variables that occur in the term $t$.
For a variable $x$ we have 
\[
v(x) = x_I(v(x)) = x_I^v.
\]
Given the definition of the polynomial function $t_I$, we find, e.g., 
\begin{equation}\label{eq:dot:1}
x_I^v \cdot y_I^v = x_I(v(x)) \cdot y_ I(v(y)) =  (x \cdot y)_I (v(x) , v(y))  = (x \cdot y)_I^v.
\end{equation}
For $A \in \Fml$ we define $A_I^v \in \FmlS$ inductively:
\begin{enumerate}
	\item $\perp_I^v := \perp$
	\item $P_I^v := P$, where $P \in \Prop$
	\item $(A \to B)_I^v := A_I^v \to B_I^v$, where $A \in \Fml$ and $B \in \Fml$
	\item $(s:A)_I^v := s_I^v : A_I^v$, where $A \in \Fml$ and $s \in \Tm$
\end{enumerate}
Let $A \in \Fml$ and let $x_1,\ldots,x_n$ be the variables that occur in $A$. Then $A_I$ denotes the function $A_I : S^n \to \FmlS$ defined by $A_I(y_1,...,y_n):=A_I^v$ where $v$ is such that $v(x_i)=y_i$.

\begin{definition}[Semiring model]
A \emph{semiring model} is a tuple  $M = (K, *, I, J)$ where
\begin{enumerate}
\item $K= (S, +, \cdot, 0, 1)$ is a semiring
\item $*$ is a truth assignment for atomic propositions, i.e., $*: \Prop \to \{\false,\true\}$
\item $I$ is an interpretation, i.e., $I: \JConst \to S$
\item $J$ is an evidence relation.
\end{enumerate}
\end{definition}

First we define truth in a semiring model for a given valuation. Because variables represent arbitrary justifications, we require a formula to be true for all valuations in order to be true in a semiring model. This means a formula with variables is interpreted as universally quantified.

\begin{definition}[Truth in a semiring model]
Let $M= (K, *, I, J)$ be a semi\-ring model, $v$ a valuation and $A$ a formula.
$M, v \Vdash A$ is defined as follows:
\begin{itemize}
\item $M, v \nVdash \perp$
\item $M, v \Vdash P$ \quad if{f} $\quad P^*=\true$
\item $M, v \Vdash A \to B$ \quad if{f} \quad  $M,v \nVdash A$ or $M,v \Vdash B$
\item $M, v \Vdash s:A$ \quad if{f} \quad  $J(s_I^v,A_I^v)$
\end{itemize}
Further we set $M \Vdash A$  \quad if{f} \quad  $M, v \Vdash A$ for all valuations $v$.
\end{definition}

For a semiring model $M$ and a theory $T$,
$M \Vdash T$ means $M \Vdash A$ for all $A \in T$.

\begin{definition}[Semantic consequence]
A theory $T$ \emph{entails} a formula $F$, in symbols
$T \Vdash F$, if for each semiring model~$M$ we have that 
\[
M \Vdash T \text{ implies } M \Vdash F.
\]
\end{definition}

By unfolding the definitions, we immediately get the following lemma, which is useful to establish soundness of $\SE$.

\begin{lemma}\label{l:one:1}
Let $M=(K,*,I,J)$ be a semiring model and let $v$ and $w$ be valuations with $v(x_i)=(t_i)_I^w$ for variables $x_i$ and terms $t_i$. Then
\[
M,v \Vdash A \quad\text{if{f}}\quad M,w \Vdash A\sigma \text{, where } \sigma(x_i)=t_i.
\]
\end{lemma}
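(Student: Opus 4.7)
The plan is induction on the structure of $A$, with a preliminary term-level substitution identity doing most of the work. Specifically, I would first prove, by induction on a term $s$, that under the hypothesis $v(x_i) = (t_i)_I^w$ we have
\[
s_I^v = (s\sigma)_I^w.
\]
For a variable $x_i$, this reduces to $v(x_i) = (t_i)_I^w$ since $\sigma(x_i) = t_i$. For a constant $c$, both sides equal $I(c)$. For $s = s_1 + s_2$ and $s = s_1 \cdot s_2$, one invokes the IH and the fact, already noted in equation~(\ref{eq:dot:1}), that $I$ extends to a homomorphism into the polynomial semiring, so that evaluation commutes with $+$ and $\cdot$.

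From this term identity I would derive its formula analogue $A_I^v = (A\sigma)_I^w$ as elements of $\FmlS$, by induction on $A$ following the clauses of the definition of $A_I^v$. The atomic clauses $\bot$ and $P$ are immediate, the implication clause is componentwise, and the clause $s:B$ combines the term identity for $s$ with the IH for $B$.

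With this in hand, the main induction on $A$ is routine. The cases $A = \bot$ and $A = P$ are trivial because the truth of such formulas does not involve the valuation and $A\sigma = A$. The implication case $A = B \to C$ follows from the IH applied to $B$ and $C$, using $(B \to C)\sigma = B\sigma \to C\sigma$. The only clause that actually uses the preparatory identities is $A = s:B$: by definition $M,v \Vdash s:B$ iff $J(s_I^v, B_I^v)$, while $M,w \Vdash s\sigma:B\sigma$ iff $J((s\sigma)_I^w, (B\sigma)_I^w)$, and by the two identities above these conditions coincide.

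The main potential obstacle is not mathematical but notational, namely that $A\sigma$ is defined via a detour through fresh variables $y_i$ so as to realize truly simultaneous substitution. Since the language has no binding constructs, this renaming is semantically inert: an easy observation shows that for any fresh $y_i$ and the valuation $w'$ agreeing with $w$ on every variable except with $w'(y_i) = w(x_i)$, evaluating through the $y_i$'s gives the same element of $S$ (respectively of $\FmlS$) as the direct simultaneous replacement of each $x_i$ by $t_i$. Once this bookkeeping is discharged, everything reduces to the homomorphism property of $I$, in line with the paper's remark that the lemma follows by unfolding the definitions.
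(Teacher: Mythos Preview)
Your proposal is correct and follows essentially the same route as the paper: induction on the structure of $A$, with the only nontrivial case $s{:}B$ handled via the identities $s_I^v = (s\sigma)_I^w$ and $B_I^v = (B\sigma)_I^w$. The paper's proof simply unfolds these identities inline in the $s{:}B$ case (writing $s_I(v(x_1),\ldots,v(x_n)) = s_I((t_1)_I^w,\ldots,(t_n)_I^w) = (s\sigma)_I^w$ and likewise for $B$) rather than isolating them as preliminary lemmas, and it does not comment on the fresh-variable detour in the definition of $A\sigma$; your version is a bit more explicit on both points but otherwise identical in structure.
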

\begin{proof}
By induction on the structure of $A$.
\begin{itemize}
\item Case $A=\perp$. We have $M,v \nVdash \perp$ and $M,w \nVdash \perp$.
\item Case $A=P$. We have $M,v \Vdash P \Leftrightarrow P^*=\true \Leftrightarrow M,w \Vdash P$.
\item Case $A=B \to C$. We have $M,v \Vdash B \to C\\ \Leftrightarrow M,v \nVdash B$ or $M,v \Vdash C\\ 
\stackrel{\text{I.H.}}{\Leftrightarrow} M,w \nVdash B\sigma$ or $M,w \Vdash C\sigma \\
\Leftrightarrow M,w \Vdash B\sigma \to C\sigma \Leftrightarrow M,w \Vdash (B \to C)\sigma$.
\item Case $A=s:B$. We have $M,v \Vdash s:B
\\\Leftrightarrow J(s_I^v, B_I^v) 
\\\Leftrightarrow J(s_I(v(x_1),...,v(x_n)), B_I(v(x_1),...,v(x_n))) 
\\\Leftrightarrow J(s_I((t_1)_I^w,...,(t_n)_I^w), B_I((t_1)_I^w,...,(t_n)_I^w)) 
\\\Leftrightarrow J((s\sigma)_I^w, (B\sigma)_I^w) 
\Leftrightarrow M,w \Vdash s\sigma:B\sigma 
\\\Leftrightarrow M,w \Vdash (s:B)\sigma$. \qedhere
\end{itemize}
\end{proof}

\begin{theorem}[Soundness]
Let $T$ be an arbitrary theory. Then:
\[
T \vdash F \quad\text{implies}\quad T \Vdash F.
\]
\end{theorem}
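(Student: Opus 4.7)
The plan is to argue by induction on the length of an $\SE$-derivation of $F$ from $T$, fixing an arbitrary semiring model $M = (K,*,I,J)$ with $M \Vdash T$ and showing $M \Vdash F$. The cases split into (i) $F \in T$, which is immediate, (ii) $F$ is an axiom, which requires validating each axiom scheme individually, and (iii) $F$ is obtained by \axiom{MP} or \axiom{jv}.

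For the logical axioms, \axiom{CL} holds because the clauses for $\perp$ and $\to$ in the definition of truth behave classically, so any propositional tautology is true under every valuation. Axiom \axiom{j} is validated by clause~1 of the definition of an evidence relation: if $M,v \Vdash x{:}(A \to B)$ and $M,v \Vdash y{:}A$, then $J(x_I^v,(A \to B)_I^v)$ and $J(y_I^v,A_I^v)$, so closure under application gives $J((x \cdot y)_I^v, B_I^v)$ using \eqref{eq:dot:1}. Axiom \axiom{j+} is handled analogously using clause~2 of evidence relations.

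The main work, and the place where semirings really enter, is the semiring axioms. The key observation I will use throughout is that for any valuation $v$ and formula $A$ containing the variable $w$, the truth of $A[w/s]$ at $v$ depends only on the semiring element $s_I^v$: indeed, applying Lemma~\ref{l:one:1} with $\sigma(w) = s$ (and $\sigma$ the identity elsewhere) shows $M,v \Vdash A[w/s]$ iff $M,v' \Vdash A$ for the valuation $v'$ agreeing with $v$ except $v'(w) = s_I^v$. Hence for \axiom{a+}, because $((x+y)+z)_I^v = (v(x)+v(y))+v(z) = v(x)+(v(y)+v(z)) = (x+(y+z))_I^v$ by associativity of $+$ in $K$, the two sides of the implication have the same truth value at $v$; the biconditional is obtained in spirit. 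The same argument works for \axiom{c+}, \axiom{0+}, \axiom{am}, \axiom{a0}, \axiom{a1}, \axiom{dl}, \axiom{dr}, each time invoking the corresponding semiring identity from Definition~\ref{d:semiring:1}.

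For the rules, \axiom{MP} is trivial from the $\to$-clause. For \axiom{jv}, suppose by induction that $M \Vdash A$, i.e.\ $M,v \Vdash A$ for every valuation $v$; I must show $M,w \Vdash A[x/t]$ for every $w$. Given $w$, define $v$ by $v(x) := t_I^w$ and $v(y) := w(y)$ for $y \neq x$; Lemma~\ref{l:one:1} with the substitution $\sigma(x) = t$ then yields $M,w \Vdash A[x/t]$ iff $M,v \Vdash A$, and the latter holds by hypothesis. I expect the main obstacle to be the bookkeeping in the semiring-axiom cases, where one must track that $A[w/s]$ and $A[w/s']$ have equal truth values whenever $s_I^v = (s')_I^v$ in $K$; Lemma~\ref{l:one:1} is precisely the tool that makes this routine rather than delicate.
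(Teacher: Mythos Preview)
Your proof is correct and follows the same inductive structure as the paper's. The one minor difference is that for the semiring axioms the paper carries out an explicit structural induction on $A$ to show $M,v \Vdash A[w/s] \Leftrightarrow M,v \Vdash A[w/t]$ whenever $s_I^v = t_I^v$, whereas you obtain this directly by two applications of Lemma~\ref{l:one:1}; this is a harmless streamlining, since the inline induction in the paper is essentially a special case of that lemma.
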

\begin{proof}
As usual by induction on the length of the derivation of $F$. Let $M=(K,*,I,J)$ be a semiring model such that $M \Vdash T$.
To establish our claim when $F$ is an axiom or an element of $T$, we let $v$ be an arbitrary valuation and show $M,v \Vdash F$ for the following cases:
\begin{enumerate}
\item $F \in T$. Trivial.
\item \axiom{CL}. Trivial.
\item \axiom{j}. Assume $M,v \Vdash x:(A \to B)$ and $M,v \Vdash y:A$.
That is  $J(x_I^v, (A \to B)_I^v)$ and $J(y_I^v, A_I^v)$ hold, which by Definition~\ref{d:er:1} implies
$J(x_I^v \cdot y_I^v, B_I^v)$.
Hence by~\eqref{eq:dot:1} we get $J((x \cdot y)_I^v, B_I^v)$, which yields
$ M,v \Vdash x \cdot y:B$.
\item \axiom{j+}. Similar to the previous case.

\item For the semiring axioms we prove
\[
M,v \Vdash A[x/s] \Leftrightarrow M,v \Vdash A[x/t] \text{ for all formulas $A$,}\]
where $s_I^v=t_I^v$ by induction on the structure of $A$:
\begin{itemize}
\item $A=\perp$ or $A=P$. Trivial.
\item $A=B \to C$. $M,v \Vdash (B \to C)[x/s]$
$\\\Leftrightarrow M,v \Vdash B[x/s] \to C[x/s]$
$\\\Leftrightarrow M,v \nVdash B[x/s]$ or $M,v \Vdash C[x/s]$
$\\\stackrel{\text{I.H.}}{\Leftrightarrow} M,v \nVdash B[x/t]$ or $M,v \Vdash C[x/t]$
$\\\Leftrightarrow M,v \Vdash B[x/t] \to C[x/t]$
$\\\Leftrightarrow M,v \Vdash (B \to C)[x/t]$.
\item $A=u:B$. $M,v \Vdash (u:B)[x/s]$
$\\\Leftrightarrow M,v \Vdash u[x/s]:B[x/s]$
$\\\Leftrightarrow J(u[x/s]_I^v,B[x/s]_I^v)$
$\\\Leftrightarrow J(u[x/t]_I^v,B[x/t]_I^v)$
$\\\Leftrightarrow M,v \Vdash u[x/t]:B[x/t]$
$\\\Leftrightarrow M,v \Vdash (u:B)[x/t]$.
\end{itemize}
By mentioning that all the semiring axioms have the form $A[x/s] \to A[x/t]$ or $A[x/s] \leftrightarrow A[x/t]$ with $s_I^v=t_I^v$, we finish this case.
\end{enumerate}

The case when $F$ has been derived by \axiom{MP} follows by I.H.~as usual.
For the case when $F = A[x/t]$ has been derived from $A$ by \axiom{jv}, we find by I.H.~that $M \Vdash A$, which is 
\begin{equation}\label{eq:sound:1}
M,v \Vdash A \text{ for all valuations $v$.}
\end{equation}
Given the term $t$ and an arbitrary valuation $w$, we find that there exists a valuation $v$ such that $v(x)=t_I^w$ and $v(y)=w(y)$ for all $y \neq x$.
By Lemma~\ref{l:one:1} we get 
\[
M,v \Vdash A \quad \text{if{f}} \quad M,w \Vdash A[x/t].
\]
Thus using \eqref{eq:sound:1}, we obtain $M,w \Vdash A[x/t]$. Since $w$ was arbitrary, we conclude $M \Vdash A[x/t]$.
\end{proof}

Next we establish completeness of $\SE$ with respect to semiring models.
For the completeness proof, we consider the free semiring over $\JConst \cup \JVar$.
We have for $s,t \in \Tm$:
\begin{itemize}
\item $[t]$ is the equivalence class of $t$ with respect to the semiring equalities, see Definition~\ref{d:semiring:1};
\item $[s]+[t]:=[s+t]$;
\item $[s]\cdot [t] := [s\cdot t]$;
\item $S_{\Tm} := \{[t]: t \in \Tm\}$;
\item $K_{\Tm} := (S_{\Tm}, +, \cdot, [0], [1])$ is the free semiring over $\JConst \cup \JVar$.
\end{itemize}

The following lemma states that $\SE$ respects the semiring equalities.

\begin{lemma}\label{eot}
Let $T$ be a theory and $s, t$ be terms with $[s]=[t]$. For each formula~$A$ we have
\[
T \vdash_{\SE} A[w/s] \quad\text{if{f}}\quad T \vdash_{\SE} A[w/t].
\]
\end{lemma}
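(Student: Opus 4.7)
The plan is to reduce the equality $[s] = [t]$ in the free semiring $K_{\Tm}$ to a finite chain of elementary rewrites and to show that $\SE$ can internalise each single rewrite as a biconditional between $A[w/s]$ and $A[w/t]$.

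Since $K_{\Tm}$ is free over $\JConst \cup \JVar$, the equality $[s] = [t]$ holds exactly when there is a chain $s = u_0, u_1, \ldots, u_n = t$ such that each $u_{i+1}$ arises from $u_i$ by replacing, at some position $p_i$, a subterm of the form $L\sigma_i$ with $R\sigma_i$, where $L = R$ is one of the defining semiring identities (associativity, commutativity, and $0$-unit of $+$; associativity of $\cdot$; the $1$-unit laws; $0$-annihilation; and left/right distributivity) or its converse. I would proceed by induction on $n$; the case $n = 0$ is trivial, and the induction step reduces the claim to the one-rewrite case.

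For a single rewrite $L\sigma \leadsto R\sigma$ at position $p$ of $s$, I would pick a variable $y$ fresh for $A$, $s$, $t$ and the range of $\sigma$, and let $u$ be $s$ with the subterm at $p$ replaced by $y$. Setting $B := A[w/u]$, the freshness of $y$ gives $B[y/L\sigma] = A[w/s]$ and $B[y/R\sigma] = A[w/t]$, so it is enough to derive the biconditional $B[y/L\sigma] \leftrightarrow B[y/R\sigma]$ in $\SE$. After renaming the schematic variables of $L$ and $R$ to fresh $x_1',\ldots,x_k'$, every semiring axiom directly yields the biconditional $B[y/L(x_1',\ldots,x_k')] \leftrightarrow B[y/R(x_1',\ldots,x_k')]$: most axioms are already stated as biconditionals, the converse of \axiom{a+} is supplied by Lemma~\ref{upl1}, and the converse of \axiom{c+} is obtained by swapping the roles of its two schematic variables. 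Applying rule \axiom{jv} once for each $x_i' \mapsto \sigma(x_i)$ then specialises this biconditional to $B[y/L\sigma] \leftrightarrow B[y/R\sigma]$, after which \axiom{MP} gives the equivalence of $A[w/s]$ and $A[w/t]$; the derivation uses no property of the theory $T$.

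The only real obstacle is bookkeeping around substitutions: one must ensure that $y$ and the $x_i'$ are chosen fresh enough that no variable capture occurs and that the outer structure of $A$ around the rewrite position is preserved when the semiring axiom is applied. This argument is essentially the systematic generalisation of the single-occurrence commutativity example the authors discuss immediately before Lemma~\ref{upl1}.
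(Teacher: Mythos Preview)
Your argument is correct and is exactly the systematic generalisation the authors have in mind: the paper does not spell out a proof of this lemma, but the worked example you cite (the paragraph before Lemma~\ref{upl1}) is the single-step case of your chain-of-rewrites induction, and your handling of the one non-biconditional axioms via Lemma~\ref{upl1} and variable swapping fills in precisely the missing converses. There is nothing to add.
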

Assume we are given an interpretation $I$ that maps constants to their equivalence class and a valuation $v$ that maps each variable $x_i$ to the equivalence class $[t_i]$ of some term $t_i$. 
The following lemma states that the interpretation of a term $s$ under $I$ and $v$ is the equivalence class of $s$
with each $x_i$ being replaced by $t_i$.
\begin{lemma}\label{tlt}
Assume that we are given an interpretation $I: \JConst \to S_\Tm$ with $I(c) = [c]$, a term $s$, and a valuation $v: \JVar \to S_\Tm$ with $v(x_i) =[t_i]$. Then we have 
\[
s_I^v = [s\sigma] \text{, where } \sigma(x_i)=t_i.
\]
\end{lemma}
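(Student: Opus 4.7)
The plan is to prove Lemma~\ref{tlt} by structural induction on the term $s$, using the fact that $I$ has been extended to a homomorphism from $\Tm$ into the polynomial semiring $S_\Tm[\JVar]$, together with the definitions of $+$ and $\cdot$ in the free semiring $K_\Tm$.

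For the base cases, I would handle constants and variables separately. If $s = c \in \JConst$, then $c\sigma = c$ (substitutions act only on variables), so $[s\sigma] = [c] = I(c) = c_I^v$ by the assumption on~$I$. If $s = x_i \in \JVar$, then $x_i\sigma = t_i$ and by definition $(x_i)_I^v = v(x_i) = [t_i] = [x_i\sigma]$, which matches.

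For the inductive step, I would consider the two cases $s = s_1 + s_2$ and $s = s_1 \cdot s_2$ uniformly, using the homomorphism property of $I$. Concretely, the analogue of equation~\eqref{eq:dot:1} for the semiring $S_\Tm$ gives $(s_1 \circ s_2)_I^v = (s_1)_I^v \circ (s_2)_I^v$ for $\circ \in \{+,\cdot\}$. By the induction hypothesis, $(s_1)_I^v = [s_1\sigma]$ and $(s_2)_I^v = [s_2\sigma]$, so this equals $[s_1\sigma] \circ [s_2\sigma] = [s_1\sigma \circ s_2\sigma] = [(s_1 \circ s_2)\sigma]$, where the second equality uses the definition of the operations on equivalence classes in $K_\Tm$, and the last equality uses the fact that substitution commutes with the term constructors.

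There is no real obstacle here; the argument is essentially a verification that substitution and interpretation both respect the algebraic structure in the same way. The only subtlety worth flagging is the implicit use of the fresh-variable renaming in the definition of $A\sigma$ (hence $s\sigma$) given earlier in the paper: one should observe that for terms $s$ with variables among the $x_i$, the double-renaming definition $s\sigma = s[x_1/y_1]\cdots[y_n/\sigma(x_n)]$ yields exactly the expected simultaneous substitution, so that $(s_1 \circ s_2)\sigma = s_1\sigma \circ s_2\sigma$. Once this is noted, the induction goes through mechanically.
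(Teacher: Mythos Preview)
Your proposal is correct and follows essentially the same approach as the paper: structural induction on~$s$, with the base cases for constants and variables handled exactly as in the paper, and the inductive cases for $+$ and~$\cdot$ reduced to the induction hypothesis via the homomorphism property of~$I$ and the definition of the operations in~$K_\Tm$. The only cosmetic difference is that you treat the two binary operations uniformly using $\circ\in\{+,\cdot\}$ and make explicit the observation that substitution commutes with the term constructors, whereas the paper writes out the two cases separately and leaves this commutation implicit.
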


\begin{proof}
Induction on the structure of $s$:
\begin{itemize}
\item $c_I^v = [c]$ by definition of $I$.
\item $(x_i)_I^v = [t_i]$ by definition of $v$.
\item $(s_1+s_2)_I^v = (s_1)_I^v+(s_2)_I^v$.
	By I.H.~we have
	\[
	(s_1)_I^v =  [s_1\sigma] \quad\text{and}\quad (s_2)_I^v = [s_2\sigma].
	\]
	Thus $ (s_1)_I^v+(s_2)_I^v = [s_1\sigma]+[s_2\sigma] = [(s_1+s_2)\sigma]$.
\item $(s_1 \cdot s_2)_I^v = (s_1)_I^v \cdot (s_2)_I^v$.
	By I.H.~we have
	\[
	(s_1)_I^v =  [s_1\sigma] \quad\text{and}\quad (s_2)_I^v = [s_2\sigma].
	\]
	Thus $(s_1)_I^v \cdot (s_2)_I^v = [s_1\sigma] \cdot [s_2\sigma] = [(s_1 \cdot s_2)\sigma]$ \qedhere
\end{itemize}
\end{proof}

We extend the notion of equivalence to formulas by 
defining the function 
\[
[\cdot]: \Fml \to \FmlSTm
\]
as follows: 
\begin{itemize}
\item $[\perp] := \perp$
\item $[P] := P$
\item $[A \to B] := [A] \to [B]$
\item $[t:A] := [t]:[A]$
\end{itemize}

Intuitively $[A]$ is the formula where each justification term is replaced by its equivalence class in the free semiring.
Observe that if $I(c)=[c]$ and $v(x)=[x]$, then $[A]=A_I^v$. Now we extend \Cref{tlt} to formulas.

\begin{lemma}\label{tlf}
Assume that we are given the interpretation $I: \JConst \to S_\Tm$ with $I(c) = [c]$, a formula $A$, and a valuation $v: \JVar \to S_\Tm$ with $v(x_i) =[t_i]$. Then we have 
\[
A_I^v = [A\sigma] \text{, where } \sigma(x_i)=t_i.
\]
\end{lemma}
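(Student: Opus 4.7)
The plan is a straightforward structural induction on the formula $A$, mirroring the definition of $A_I^v$ and of $[\cdot]$ clause by clause, with the term-level Lemma~\ref{tlt} doing the real work in the only nontrivial case. Throughout I will use that the substitution $\sigma$ commutes with formula formation, i.e.\ $(B \to C)\sigma = B\sigma \to C\sigma$ and $(s:B)\sigma = (s\sigma):(B\sigma)$, which is immediate from the definition of substitution on formulas.

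In the base cases $A = \perp$ and $A = P$, the formula contains no terms, so $A\sigma = A$, $[A] = A$, and by definition $A_I^v = A$; hence $A_I^v = [A\sigma]$ trivially. For the implicational case $A = B \to C$, unfolding definitions gives $(B \to C)_I^v = B_I^v \to C_I^v$, and the induction hypothesis supplies $B_I^v = [B\sigma]$ and $C_I^v = [C\sigma]$. Applying the definition of $[\cdot]$ in reverse yields $[B\sigma] \to [C\sigma] = [B\sigma \to C\sigma] = [(B \to C)\sigma]$, as desired.

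For the essential case $A = s:B$, I combine the two available facts. By definition,
\[
(s:B)_I^v \;=\; s_I^v : B_I^v.
\]
By Lemma~\ref{tlt} applied to the term $s$, $s_I^v = [s\sigma]$, and by the induction hypothesis $B_I^v = [B\sigma]$. Substituting back and using the definition of $[\cdot]$ on formulas of the form $t:C$,
\[
s_I^v : B_I^v \;=\; [s\sigma] : [B\sigma] \;=\; [(s\sigma):(B\sigma)] \;=\; [(s:B)\sigma].
\]
This completes the induction.

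There is no real obstacle; the only point worth checking carefully is that the previous lemma (tlt) is strong enough to cover the term appearing inside $s:B$ for an arbitrary formula, which it is, since tlt is stated for all terms~$s$ and makes no assumptions beyond $I(c)=[c]$ and $v(x_i)=[t_i]$, exactly the hypotheses here. So the proof is essentially bookkeeping, with tlt carrying the algebraic content.
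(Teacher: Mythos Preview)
Your proof is correct and follows essentially the same approach as the paper: a straightforward structural induction on $A$, with Lemma~\ref{tlt} handling the term in the $s{:}B$ case and the induction hypothesis handling the subformula. The paper's version is just more terse.
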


\begin{proof}
Induction on the structure of $A$:
\begin{itemize}
\item $\perp_I^v = \perp = [\perp]$
\item $P_I^v = P = [P]$
\item $(A \to B)_I^v = A_I^v \to B_I^v \\\stackrel{\text{I.H.}}{=} [A\sigma] \to [B\sigma] = [(A \to B)\sigma]$
\item $(s:A)_I^v = s_I^v:A_I^v \\\stackrel{\text{I.H.~and L.~\ref{tlt}}}{=} [s\sigma]:[A\sigma] = [(s:A)\sigma]$ \qedhere
\end{itemize}
\end{proof}

Let $\Prop2$ be an infinite set of atomic propositions with $\Prop \cap \Prop2$ being empty. Then there exists an bijective function $f: S_{\Tm} \times \FmlSTm \to \Prop2$. We assume $f$ to be fixed for the rest of this section. Based on this function we define a translation~$'$ that maps formulas of $\Fml$ to pure propositional formulas containing atomic propositions from $\Prop \cup \Prop2$.
\begin{enumerate}
	\item $\perp ' := \perp$
	\item $P' := P$
	\item $(A \to B)' := A' \to B'$
	\item $(t:A)' := f([t],[A])$
\end{enumerate}

Let $T$ be a theory. We define the corresponding theory:
\[T' := \{(A\sigma)' \mid A \in T \text{ or } A \text{ is an axiom of } \SE, \sigma: \JVar \to \Tm\}.\]
Suppose $A' \in T'$. Then there exist a formula $B$ with $B \in T$ or $B$ is an axiom and $\sigma: \JVar \to \Tm$ such that $(B\sigma)' = A'$. This implies $B\sigma[x/t]' = A[x/t]'$. Now we have $A[x/t]' \in T'$. Therefore the following implication is proved:
\begin{equation}\label{eq:T'}
A' \in T' \Rightarrow A[x/t]' \in T'
\end{equation}

In fact this does not only hold for formulas in $T'$ but also for all formulas derived from $T'$ by classical propositional logic.

\begin{lemma}
	If $T' \vdash_{\CL} A'$ then $T' \vdash_{\CL} A[x/t]'$.
\end{lemma}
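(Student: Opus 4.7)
The plan is to reduce the claim to the standard closure of $\CL$ under uniform substitution of propositional atoms. I would first introduce a propositional substitution $\rho$ on the atoms of $\Prop \cup \Prop2$ defined by $\rho(P) := P$ for $P \in \Prop$ and $\rho(f([s],[B])) := f([s[x/t]],[B[x/t]])$ for $f([s],[B]) \in \Prop2$, extended homomorphically to all propositional formulas over $\Prop \cup \Prop2$. Well-definedness on equivalence classes follows because the defining equalities of a semiring are themselves stable under substituting $t$ for $x$, so $[s]=[s']$ implies $[s[x/t]]=[s'[x/t]]$ (and likewise for the formula quotient $[\cdot]$).

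Next I would verify by a routine induction on $C \in \Fml$ the key compatibility
\[
(C')\rho \;=\; (C[x/t])',
\]
whose only interesting case is $C = s:B$, handled directly by the definition of $\rho$ together with $(s:B)[x/t] = s[x/t]:B[x/t]$. Then I would show $T'\rho \subseteq T'$: any $\phi = (B\sigma)' \in T'$ satisfies $\phi\rho = (B\sigma[x/t])' = (B\tau)'$ for the substitution $\tau(y) := \sigma(y)[x/t]$, and so belongs to $T'$ by its very definition; this generalizes the reasoning behind \eqref{eq:T'} from the single substitution $[x/t]$ to its composition with $\sigma$.

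Finally I would invoke the standard fact that classical derivability commutes with uniform substitution of atoms: from $T' \vdash_{\CL} A'$ one obtains $T'\rho \vdash_{\CL} (A')\rho$ by pushing $\rho$ through the derivation (tautologies remain tautologies under atom substitution and modus ponens is preserved because $\rho$ is a homomorphism). Combining this with $T'\rho \subseteq T'$ and the compatibility $(A')\rho = (A[x/t])'$ yields exactly $T' \vdash_{\CL} A[x/t]'$. The main obstacle is pure bookkeeping: checking the equality $B\sigma[x/t] = B\tau$ requires unpacking the paper's definition of $A\sigma$ via fresh intermediate variables to ensure no variable capture occurs, but no new idea is needed.
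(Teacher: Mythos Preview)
Your argument is correct and takes a different route from the paper's. The paper proceeds by a direct induction on the $\CL$-derivation of $A'$: since $T'$ already contains all propositional tautologies, the base case reduces to \eqref{eq:T'}; for the \textbf{MP} step, surjectivity of $f$ lets one write the minor premise as some $C'$ (and hence the major premise as $(C\to A)'$), and the induction hypothesis delivers the substituted premises directly. You instead package the whole argument into a single propositional substitution $\rho$ on $\Prop \cup \Prop2$ and appeal once to closure of $\CL$ under uniform atom substitution.

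The two strategies are closely related---unfolding the closure-under-substitution meta-result reproduces essentially the paper's induction---but your formulation makes explicit two points the paper leaves implicit: (i) the well-definedness of $C' \mapsto (C[x/t])'$, i.e.\ independence from the choice of representative $C$, which you handle via substitution-stability of the semiring congruence; and (ii) that $T'$ is closed not merely under the single substitution $[x/t]$ but under the composition $\sigma \mapsto (\,y \mapsto \sigma(y)[x/t]\,)$, which is exactly what you need for $T'\rho \subseteq T'$ and is the proper generalization of \eqref{eq:T'}. The paper's proof is shorter; yours isolates a reusable mechanism and separates the algebraic bookkeeping from the logical step.
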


\begin{proof}
	Induction on the derivation of $A'$.
	Note that $T'$ contains all the axioms of $\CL$. So we can omit this case.
	\begin{enumerate}
		\item If $A' \in T'$ then $A[x/t]' \in T'$ by the above observation. Thus we get $T' \vdash_{\CL} A[x/t]'$.
		\item If $A'$ is obtained by \textbf{MP} from $B$ and $B \to A'$ then $B$ can be written as $C'$ because $f$ is surjective. The induction hypothesis ($T' \vdash_{\CL} C[x/t]'$ and $T' \vdash_{\CL} C[x/t]' \to A[x/t]'$) yields $T' \vdash_{\CL} A[x/t]'$. \qedhere
	\end{enumerate}
\end{proof}

The translation~$'$ respects the derivability relation of $\SE$. Hence we have the following lemma.
\begin{lemma}\label{secll}
	$T \vdash_{\SE} A \Leftrightarrow T' \vdash_{\CL} A'$
\end{lemma}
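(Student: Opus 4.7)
The biconditional splits, with $(\Rightarrow)$ following a routine induction and $(\Leftarrow)$ requiring care because the translation $'$ is not injective: it collapses any two $\SE$-formulas that agree up to semiring-equivalent terms at corresponding positions.

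For $(\Rightarrow)$, I induct on the $\SE$-derivation of $A$ from $T$. If $A$ is an axiom of $\SE$ or belongs to $T$, choosing $\sigma$ to be the identity gives $A' = (A\sigma)' \in T'$, hence $T' \vdash_{\CL} A'$. The $\axiom{MP}$ case is immediate since $(B \to A)' = B' \to A'$. The $\axiom{jv}$ case is exactly what the preceding lemma was designed for: from $T' \vdash_{\CL} B'$ it yields $T' \vdash_{\CL} B[x/t]' = A'$.

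For $(\Leftarrow)$, I fix, for each $p \in \Prop2$, representatives $t_p \in \Tm$ and $A_p \in \Fml$ with $f([t_p],[A_p]) = p$, which extends to a canonical lifting $\ell$ from propositional formulas over $\Prop \cup \Prop2$ into $\Fml$ satisfying $\ell(F)' = F$. I then strengthen the induction on the $\CL$-derivation to: for every formula $F$ appearing in it, $T \vdash_{\SE} \ell(F)$. Propositional tautologies lift to instances of $\axiom{CL}$; MP lifts to MP; for $F \in T'$, write $F = (B\sigma)'$ with $B$ an axiom or element of $T$, so that $T \vdash_{\SE} B\sigma$ by $\axiom{jv}$, and use the auxiliary fact stated below to pass from $\ell(F)' = (B\sigma)'$ to $T \vdash_{\SE} \ell(F)$. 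The desired conclusion $T \vdash_{\SE} A$ is extracted from $T \vdash_{\SE} \ell(A')$ in the same way.

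The principal technical step is the auxiliary fact that if $\phi, \psi \in \Fml$ satisfy $\phi' = \psi'$ (equivalently, $[\phi] = [\psi]$ in $\FmlSTm$) then $\vdash_{\SE} \phi \leftrightarrow \psi$. I prove this by structural induction; the only non-trivial case, $\phi = t:\phi_1$ versus $\psi = s:\psi_1$ with $[t]=[s]$ and $[\phi_1]=[\psi_1]$, is handled by iterating \Cref{eot} once per subterm position, each time replacing a term with a semiring-equivalent one through a fresh-variable hole. This is where the proof does its real work, but no principle beyond \Cref{eot} and propositional reasoning is required.
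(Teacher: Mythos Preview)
Your proof is correct and follows essentially the same route as the paper. The $(\Rightarrow)$ direction is identical. For $(\Leftarrow)$, the paper organizes the induction slightly differently: rather than fixing a canonical lift $\ell$ in advance, it proves directly ``for every $A$ with $T'\vdash_{\CL}A'$ we have $T\vdash_{\SE}A$'' and, in the \axiom{MP} case, uses surjectivity of $f$ on the fly to write the minor premise as some $C'$; the transfer from $B\sigma$ to the target formula is done exactly as you do, via \Cref{eot}. Your explicit lift $\ell$ and the stated auxiliary biconditional $\vdash_{\SE}\phi\leftrightarrow\psi$ whenever $\phi'=\psi'$ are just a tidier packaging of the same idea. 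One small remark: the ``structural induction'' framing for the auxiliary fact is not really needed, since the inductive hypothesis $\vdash_{\SE}\phi_1\leftrightarrow\psi_1$ does not by itself yield $\vdash_{\SE}t{:}\phi_1\leftrightarrow t{:}\psi_1$; what actually works is precisely what you then describe---iterating \Cref{eot} over \emph{all} term positions of $\phi$ using fresh-variable holes in the context $\phi\leftrightarrow(\,\cdot\,)$---and that argument is global rather than compositional.
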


\begin{proof}
	Left to right by induction on a derivation of $A$:
	\begin{enumerate}
		\item If $A \in T$ or $A$ is an axiom then $A' \in T'$ and therefore $T' \vdash_{\CL} A'$.
		\item If $A$ is obtained by \textbf{MP} from $B$ and $B \to A$ then the induction hypothesis ($T' \vdash_{\CL} B'$ and $T' \vdash_{\CL} B' \to A'$) immediately yields $T' \vdash_{\CL} A'$.
		\item If $A[x/t]$ is obtained by \textbf{jv} from $A$, then the I.H.~is $T' \vdash_{\CL} A'$. By the previous lemma we conclude $T' \vdash_{\CL} A[x/t]'$.
	\end{enumerate}
	Right to left by induction on a derivation of $A'$:
	\begin{enumerate}
		\item If $A' \in T'$ then there exist a formula $B$ with $B \in T$ or $B$ is an axiom and $\sigma: \JVar \to \Tm$ such that $(B\sigma)' = A'$. Trivially we have $T \vdash_{\SE} B$ and get by \textbf{jv} that $T \vdash_{\SE} B\sigma$. Since $f$ is injective, the only difference between $A$ and $B\sigma$ is that some terms may be replaced by equivalent ones (modulo the semiring). Therefore, we get $T \vdash_{\SE} A$ by using Lemma~\ref{eot}.
		\item If $A'$ is a propositional tautology then so is $A$ because $f$ is injective, but some terms in $A$ may be replaced by equivalent ones. We get $T \vdash_{\SE} A$ again by %
	 Lemma~\ref{eot}	
		and propositional reasoning.
		\item If $A'$ is obtained by \textbf{MP} from $B \to A'$ and $B$ then $B$ can be written as $C'$. The induction hypothesis ($T \vdash_{\SE} C \to A$ and $T \vdash_{\SE} C$) implies $T \vdash_{\SE} A$.
	\end{enumerate}\qedhere
\end{proof}

\Cref{secll} gives us the ability to switch from $\SE$ to $\CL$ and vice versa. Therefore, we can use completeness of $\CL$ to obtain completeness for $\SE$.

\begin{theorem}[Completeness]
Let $T$ be an arbitrary theory. Then:
\[T \Vdash F \quad\text{implies}\quad T \vdash F.\]
\end{theorem}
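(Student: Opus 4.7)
The plan is to argue by contrapositive. Suppose $T \not\vdash_{\SE} F$. By Lemma~\ref{secll}, $T' \not\vdash_{\CL} F'$, so by completeness of classical propositional logic there is a truth assignment $*' : \Prop \cup \Prop_2 \to \{\true,\false\}$ satisfying every formula in $T'$ while falsifying $F'$. The goal is to convert $*'$ into a semiring model $M$ witnessing $T \not\Vdash F$.

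The canonical model I would build lives on the free semiring $K_\Tm$. Concretely, let $M = (K_\Tm, *, I, J)$ where $*$ is the restriction of $*'$ to $\Prop$, $I(c) := [c]$ for every $c \in \JConst$, and $J([s], \tilde A)$ holds iff $f([s], \tilde A)^{*'} = \true$ for every $[s] \in S_\Tm$ and $\tilde A \in \FmlSTm$. I must verify that $J$ is an evidence relation. For the application clause, given $\tilde A, \tilde B \in \FmlSTm$ I pick representatives $A,B \in \Fml$ with $[A] = \tilde A$ and $[B] = \tilde B$; then rule \axiom{jv} applied to axiom \axiom{j} gives $\vdash_{\SE} s:(A \to B) \to (t:A \to s \cdot t:B)$, whose $'$-translation lies in $T'$ and so is satisfied by $*'$. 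Hence $J([s], \tilde A \to \tilde B)$ and $J([t], \tilde A)$ force $J([s \cdot t], \tilde B)$, which equals $J([s] \cdot [t], \tilde B)$. The sum clause follows identically from axiom \axiom{j+}.

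What remains is to show $M \Vdash T$ and $M \not\Vdash F$, both consequences of a truth-lemma-style claim: for every $B \in \Fml$ and valuation $v$ with $v(x_i) = [t_i]$, setting $\sigma(x_i) := t_i$, we have $M, v \Vdash B$ iff $(B\sigma)'^{*'} = \true$. The proof is a routine induction on $B$; the only informative case $B = s:C$ unfolds, via \Cref{tlt} and \Cref{tlf}, to $J([s\sigma], [C\sigma])$, which by definition of $J$ and of $'$ is exactly $((s:C)\sigma)'^{*'}$. From the claim, any $A \in T$ yields $(A\sigma)' \in T'$ for every $\sigma$, so $M, v \Vdash A$ for every $v$, giving $M \Vdash T$; and the identity-like valuation $v(x_i) := [x_i]$, with $\sigma(x_i) := x_i$, gives $M, v \not\Vdash F$ because $F'^{*'} = \false$.

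The main obstacle is the verification that $J$ is a well-defined evidence relation: the closure conditions have to hold for all elements of $S_\Tm$ acting on all formulas of $\FmlSTm$, whereas the available axioms speak about concrete terms and concrete formulas of $\Fml$. Bridging this gap relies on the combination of the substitution clause in the definition of $T'$, the fact that $J$ only sees equivalence classes so the choice of representative is immaterial, and \Cref{eot} which guarantees that derivability in $\SE$ is insensitive to semiring-equivalent choices of terms.
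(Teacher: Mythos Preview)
Your proposal is correct and follows essentially the same approach as the paper's proof: contrapose, invoke \Cref{secll} to reduce to classical propositional completeness, build the canonical model on the free semiring $K_\Tm$ with $I(c)=[c]$ and $J$ read off from the propositional valuation via $f$, verify the evidence-relation conditions using the substitution instances of \axiom{j} and \axiom{j+} contained in $T'$, prove the truth lemma linking $M,v\Vdash B$ to $(B\sigma)'$ via \Cref{tlt} and \Cref{tlf}, and conclude $M\Vdash T$ while the identity valuation refutes $F$. The only cosmetic difference is that the paper phrases the argument in terms of $M_*\Vdash T'$ and equation~\eqref{eq:T'} rather than invoking \axiom{jv} explicitly, but the content is identical.
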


\begin{proof}
We will prove the contraposition, which means for $T \nvdash F$ we will construct a semiring model $M$ and find a valuation $v$, such that $M \Vdash T$ and $M,v \nVdash F$.
Assume $T \nvdash F$. By \Cref{secll} we get $T' \nvdash_\CL F'$. The completeness of $\CL$ delivers $*: \Prop \cup \Prop2 \to \{\mathbb{F},\mathbb{T}\}$, such that for the $\CL$-model $M_*$ consisting of $*$ we have $M_* \Vdash T'$ and $M_* \nVdash F'$. 
Now we can define the semiring model $M$:
\begin{itemize}
\item $M := (K_{\Tm}, *|_{\Prop}, I, J)$
\item $*|_{\Prop}$ is the restriction of $*$ to $\Prop$
\item $I: \JConst \to S_{\Tm}$, $I(c) := [c]$
\item $J := \{([t], [A]) \mid M_* \Vdash f([t], [A])\}$
\end{itemize}
In order to prove that $M$ is a semiring model, we need to show that $J$ is an evidence relation.
\begin{enumerate}
\item From $M_* \Vdash T'$ we derive $M_* \Vdash (s:(A \to B) \to (t:A \to s \cdot t:B))'$ $\forall s,t \in \Tm$ and $\forall A,B \in \Fml$ by using the definition of $T'$ and \eqref{eq:T'}. It follows
\[M_* \Vdash f([s],[A \to B]) \to (f([t],[A]) \to f([s \cdot t],[B])).\]
By the truth definition in $\CL$ we find
\[\text{if } f([s],[A \to B])^* = \mathbb{T} \text{ and } f([t],[A])^* = \mathbb{T} \text{ then } f([s \cdot t],[B])^* = \mathbb{T}.\]
From the definition of $J$ in $M$ we get
\[\text{if } J([s],[A] \to [B]) \text{ and } J([t],[A]) \text{ then } J([s] \cdot [t],[B]).\]
\item From $M_* \Vdash T'$ we derive $M_* \Vdash (s:A \wedge t:A \to s+t:A)'$ $\forall s,t \in \Tm$ and $\forall A \in \Fml$ by using the definition of $T'$ and \eqref{eq:T'}. It follows
\[M_* \Vdash f([s],[A]) \wedge f([t],[A]) \to f([s+t],[A]) \text{ } \forall s,t \in \Tm \text{ and } \forall A \in \Fml.\]
By the truth definition in $\CL$ we find
\[\text{if } f([s],[A])^* = \mathbb{T} \text{ and } f([t],[A])^* = \mathbb{T} \text{ then } f([s+t],[A])^* = \mathbb{T}.\]
From the definition of $J$ in $M$ we get
\[\text{if } J([s],[A]) \text{ and } J([t],[A]) \text{ then } J([s]+[t],[A]).\]
\end{enumerate}
Knowing that $M$ is a semiring model we prove 
\begin{equation}\label{eq:comp}
M_* \Vdash (A\sigma)' \Leftrightarrow M,w \Vdash A
\end{equation}
by induction on the structure of $A$, where $w(x_i)=[t_i]$ and $\sigma(x_i)=t_i$.
\begin{itemize}
\item Case $A=\perp$. We have $M_* \nVdash \perp'$ and $M,w \nVdash \perp$.
\item Case $A=P$. We have  $M_* \Vdash P' \Leftrightarrow M_* \Vdash P \Leftrightarrow P^*=\mathbb{T} \Leftrightarrow M,w \Vdash P$.
\item Case $A=B \to C$. We have $M_* \Vdash ((B \to C)\sigma)' 
\\\Leftrightarrow M_* \Vdash (B\sigma)' \to (C\sigma)' 
\\\Leftrightarrow M_* \nVdash (B\sigma)' \text{ or } M_* \Vdash (C\sigma)' 
\\\Leftrightarrow M,w \nVdash B \text{ or } M,w \Vdash C \hfill \text{(by induction hypothesis)}
\\\Leftrightarrow M,w \Vdash B \to C
\\\Leftrightarrow M,w \Vdash B \to C$.
\item Case $A=s:B$. We have  $M_* \Vdash ((s:B)\sigma)' 
\\\Leftrightarrow M_* \Vdash (s\sigma:B\sigma)' 
\\\Leftrightarrow M_* \Vdash f([s\sigma], [B\sigma]) \hfill \text{(by definition of }'\text{)}
\\\Leftrightarrow J([s\sigma], [B\sigma]) \hfill \text{(by definition of }J\text{)}
\\\Leftrightarrow J(s_I^w, [B\sigma]) \hfill \text{(by \Cref{tlt})}
\\\Leftrightarrow J(s_I^w, B_I^w) \hfill \text{(by \Cref{tlf})}
\\\Leftrightarrow M,w \Vdash s:B$
\end{itemize}
Now we show $M \Vdash T$, i.e. $M,w  \Vdash T$ for all valuations $w$.
Hence let $w$ be an arbitrary valuation (assume $w(x_i)=[t_i]$ and $\sigma(x_i)=t_i$) and $A \in T$. It follows $A' \in T'$ and by \eqref{eq:T'} also $(A\sigma)' \in T'$. From $M_* \Vdash T'$ we get $M_* \Vdash (A\sigma)'$. \eqref{eq:comp} implies $M,w \Vdash A$. Since $w$ was arbitrary we conclude $M \Vdash T$. 

Now we consider the special case of \eqref{eq:comp} where $w=v$ with  $v(x)=[x]$. We have
\[M_* \Vdash A' \Leftrightarrow M,v \Vdash A\]
Remembering $M_* \nVdash F'$, we derive $M,v \nVdash F$, which finishes the proof. 
\end{proof}

\section{Realization}\label{s:realization}

Realization is concerned with the relationship between justification logic and modal logic.
Replacing all terms in a formula of justification logic by $\Box$-operators yields a formulas of modal logic. This is called \emph{forgetful projection} since by this translation, one `forgets' the explicit evidence for one's beliefs.
It is fairly obvious that the forgetful projection of a theorem of justification logic yields a theorem of modal logic. This is so since the translation of any axiom of justification logic yields a theorem of modal logic and the translation of the rules also yields (derivable) rules of modal logic.

The converse direction, called \emph{realization}, is more interesting and also more difficult to establish. We show that, under certain natural conditions, there is a construction that replaces all modalities in a theorem of modal logic by justification terms such that the resulting formula is a theorem of justification logic.

In this section, we let $\circ$ be the mapping from $\Fml$ to formulas of the modal logic~$\K$ that replaces all occurrences of $s:$ in a formula of $\SE$ with $\Box$.

\begin{definition}
We say that a theory $T$ is \emph{pure} if for each formula $A \in T$ we have that $\vdash_{\K} A^\circ$.
\end{definition}

We immediately get the following lemma.
\begin{lemma}[Forgetful projection] 
Let $T$ be a pure theory.
For any formula $A$ we have
\[
T \vdash_{\SE} A \quad\text{implies}\quad \vdash_{\K} A^\circ.
\]
\end{lemma}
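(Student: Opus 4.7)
The plan is to proceed by induction on the length of a derivation of $A$ from $T$ in $\SE$. The key observation driving the argument is that the translation $^\circ$ erases all justification terms: for any formula $A$, variable $w \in \JVar$, and terms $s, t$, we have
\[
(A[w/s])^\circ \;=\; (A[w/t])^\circ \;=\; A^\circ,
\]
because the only effect of substituting terms for variables in $A$ is on the terms appearing to the left of a ``$:$'', all of which are collapsed to $\Box$ by $^\circ$.

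For the base cases, if $A \in T$ then $\vdash_\K A^\circ$ by the definition of a pure theory. If $A$ is an instance of \axiom{CL}, then $A^\circ$ is still a propositional tautology, hence a $\K$-theorem. The axiom \axiom{j} translates to an instance of the K-schema $\Box(B^\circ \to C^\circ) \to (\Box B^\circ \to \Box C^\circ)$, and \axiom{j+} translates to the propositional tautology $\Box A^\circ \wedge \Box A^\circ \to \Box A^\circ$. Each semiring axiom has the form $A[w/s] \to A[w/t]$ or $A[w/s] \leftrightarrow A[w/t]$; by the observation above, both sides collapse to $A^\circ$ under $^\circ$, so the translated axiom is a propositional tautology of the form $A^\circ \to A^\circ$ or $A^\circ \leftrightarrow A^\circ$.

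For the induction step, the rule \axiom{MP} is preserved component-wise: if $\vdash_\K (B \to A)^\circ$ and $\vdash_\K B^\circ$ by I.H., then $(B \to A)^\circ = B^\circ \to A^\circ$ and \axiom{MP} inside $\K$ yields $\vdash_\K A^\circ$. For the rule \axiom{jv} that derives $A[x/t]$ from $A$, the same key observation gives $(A[x/t])^\circ = A^\circ$, so the I.H.~immediately delivers $\vdash_\K (A[x/t])^\circ$.

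There is no real obstacle here; the only point requiring care is to make precise why the semiring axioms become trivial under $^\circ$, and this is an immediate consequence of the fact that $^\circ$ forgets everything occurring in term positions. The structure of the argument is exactly parallel to the standard forgetful projection proofs in traditional justification logic, with the semiring axioms playing the role that term-equalities play there.
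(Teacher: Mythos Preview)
Your proof is correct and follows exactly the approach the paper has in mind: the paper does not give a proof but simply remarks beforehand that the translation of each $\SE$ axiom is a $\K$-theorem and that each $\SE$ rule translates to a (derivable) $\K$ rule, which is precisely the induction on derivations you spell out in detail.
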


To investigate mappings from modal logic to $\SE$, we need the deductive system $\GK$ for the logic $\K$.

\begin{definition}
	If $\Gamma$ and $\Delta$ are multisets of formulas, then the expression $\Gamma \supset \Delta$ is a sequent.
\end{definition}

The axioms of $\GK$ are 
\[
P \supset P    \qquad\text{and}\qquad  \perp \supset
\]
 where $P \in \Prop$.
The rules of $\GK$ are the following:
\[
\axiom{$(\to \supset)$}\,
\dfrac{\Gamma \supset \Delta, A \hspace*{6mm} B,\Gamma \supset \Delta}{A \to B, \Gamma \supset \Delta}\,
\qquad\qquad
\axiom{$(\supset \to)$}   \dfrac{A,\Gamma \supset \Delta, B}{\Gamma \supset \Delta, A \to B} 
\]
\[
\axiom{$(\Box)$}\, \dfrac{\Gamma \supset A}{\Box \Gamma \supset \Box A}
\]
\[
\axiom{$(w \supset)$}\, \dfrac{\Gamma \supset \Delta}{A,\Gamma \supset \Delta} 
\qquad\qquad
\axiom{$(\supset w)$}\, \dfrac{\Gamma \supset \Delta}{\Gamma \supset \Delta,A} 
\]
\[
\axiom{$(c \supset)$}\, \dfrac{A,A,\Gamma \supset \Delta}{A,\Gamma \supset \Delta} 
\qquad\qquad
\axiom{$(\supset c)$}\, \dfrac{\Gamma \supset \Delta,A,A}{\Gamma \supset \Delta,A}
\]
System $\GK$ is sound and complete for the modal logic $\K$, see, e.g.,~\cite{mints92,ts2000}.

\begin{theorem}
	Let $A$ be a formula of the language of modal logic.
	Then $\vdash_{\GK} \supset A$ if and only if $A$ is valid in $\K$.
\end{theorem}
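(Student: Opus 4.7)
The plan is to establish each direction by standard sequent-calculus arguments, since this is a textbook result about the cut-free Gentzen system for $\K$ that the authors are about to cite.

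For soundness, I would interpret a sequent $\Gamma \supset \Delta$ at a world $w$ of a Kripke model as saying that whenever $w$ forces every formula of $\Gamma$, it forces some formula of $\Delta$. Proceeding by induction on $\GK$-derivations, I would show that every derivable sequent is valid at every world of every Kripke frame. The axioms $P \supset P$ and $\perp \supset$ are immediate, and the propositional, weakening, and contraction rules are routine. The only delicate case is $(\Box)$: assuming $\Gamma \supset A$ is universally valid, if $w \Vdash \Box B$ for every $B \in \Gamma$, then every $R$-successor of $w$ forces all of $\Gamma$ and hence, by the induction hypothesis, forces $A$, so $w \Vdash \Box A$. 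Instantiating with empty antecedent yields that $\vdash_{\GK} \supset A$ implies validity of $A$ in $\K$.

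For completeness I would reduce to the known completeness of the Hilbert axiomatisation of $\K$ (classical tautologies plus the $\K$-axiom $\Box(A \to B) \to (\Box A \to \Box B)$, with modus ponens and necessitation). It then suffices to show that every Hilbert-theorem $A$ admits a $\GK$-derivation of $\supset A$. For this I would verify: (i) every propositional tautology is derivable using only the propositional rules and axioms of $\GK$; (ii) the $\K$-axiom is derivable by a short proof combining one application of $(\Box)$ with propositional steps; (iii) necessitation is admissible, since $(\Box)$ with empty antecedent turns $\supset A$ into $\supset \Box A$; and (iv) modus ponens is admissible in $\GK$.

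The main obstacle is step (iv), the admissibility of cut in $\GK$, since $\GK$ has no explicit cut rule. This is the classical Gentzen-style cut-elimination argument adapted to $\K$, proceeding by a double induction on the complexity of the cut-formula and the sum of the ranks of the two subderivations, with the modal rule $(\Box)$ handled by permuting cuts past it and invoking the induction hypothesis. Since this is a well-known result treated in detail in \cite{mints92,ts2000}, I would invoke it rather than reproduce the full syntactic argument here.
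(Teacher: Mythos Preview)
Your sketch is correct, but note that the paper does not actually give a proof of this theorem at all: it simply states the result and cites \cite{mints92,ts2000} for both directions. So there is no ``paper's own proof'' to compare against; your outline is already more explicit than what the paper provides, and it ultimately defers to exactly the same references for the cut-elimination step.
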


So instead of directly realising modal formulas, we realise sequents of $\GK$. Therefore we define what it means for a sequent to be derivable in $\SE$.

\begin{definition}\label{def:seqreal:1}
	Let $\Gamma \supset \Delta$ be a sequent where each $\Box$ is replaced by some term. $\Gamma \supset \Delta$ is called derivable in $\SE$ from a theory $T$ if $T \vdash_{\SE} \bigwedge \Gamma \to \bigvee \Delta$.
\end{definition}

In the traditional approach to constructive realization~\cite{Art01BSL,justificationLogic2019}, one would say that $\Gamma \supset \Delta$ is derivable in $\SE$ if $\bigwedge \Gamma \vdash_{\SE} \bigvee \Delta$. 
This does not work in the framework of $\SE$ since the deduction theorem does not hold for $\SE$ (see Remark~\ref{rem:dedTH:1}). Therefore, we use an approach according to Definition~\ref{def:seqreal:1}.

In the realization procedure, most of the effort goes into constructing terms for the Box-modalities introduced in the rule $(\Box)$, which is the next thing we are going to do. Because \textbf{j+} is formulated with $\wedge$ we can not use Artemov's original realization algorithm. But Kuznets~\cite{Kuz09PLS} found a realization procedure with the same ideas as Artemov except that justification terms are constructed without~$+$. It can be applied in the context of $\SE$.

For the following definition we need the notion of positive and negative occurrences of $\Box$ within a given modal formula $A$. First we assign a polarity to each subformula occurrence within $A$ as follows.
\begin{enumerate}
\item The only occurrence of $A$ within $A$ is given positive polarity.
\item If a polarity is already assigned to an occurrence $B \to C$ within $A$, then the same polarity is assigned to $C$ and the opposite polarity is assigned to $B$.
\item If a polarity is already assigned to an occurrence $\Box B$ within $A$, then the same polarity is assigned to $B$. 
\end{enumerate}
Now we assign a polarity to each occurence of $\Box$ as follows:
the leading $\Box$ in an occurrence of $\Box B$ within $A$ has the same polarity as the occurrence of $\Box B$ within $A$.

\begin{definition}
A \emph{realization} $r$ is a mapping from modal formulas to $\Fml$ such that for each modal formula $F$ we have $(r(F))^\circ =F$.
A realization is \emph{normal} if all negative occurrences of\/ $\Box$ are mapped to distinct justification variables.
\end{definition}

As usual, we need  a notion of schematicness to obtain a realization result.

\begin{definition}\label{def:schematic:1}
A theory $T$ is schematic if it satisfies the following property: for each constant $c \in \JConst$, the set of axioms $\{A \mid c:A \in T \text{ and } A \text{ is an axiom}\}$ consists of all instances of one or several (possibly zero) axiom schemes of $\SE$.
\end{definition}

\begin{lemma}\label{syllemma}
	Let $T$ be an axiomatically appropriate theory.\\
	If $T \vdash_{\SE} s:(A \to B)$ and $T \vdash_{\SE} t:(B \to C)$ then $T \vdash_{\SE} d \cdot c \cdot t \cdot s:(A \to C)$, where $T \vdash_{\SE} c:(B \to C \to (A \to (B \to C)))$ and\\ $T \vdash_{\SE} d:(A \to (B \to C) \to (A \to B \to (A \to C)))$.
\end{lemma}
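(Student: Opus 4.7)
The plan is to imitate, inside the $:$ operator, a three-step Hilbert-style derivation of the transitivity of implication, applying axiom $\axiom{j}$ once per step, and then to clean up the resulting term by a single use of the associativity axiom $\axiom{am}$.

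First, from the hypothesis $T \vdash_{\SE} c:((B \to C) \to (A \to (B \to C)))$ and $T \vdash_{\SE} t:(B \to C)$, I would take the instance of $\axiom{j}$ in which the schematic formulas are $B \to C$ and $A \to (B \to C)$, then invoke $\axiom{jv}$ to substitute $c$ for $x$ and $t$ for $y$. Two applications of $\axiom{MP}$ give
\[
T \vdash_{\SE} (c \cdot t):(A \to (B \to C)).
\]
Repeating the same pattern with $T \vdash_{\SE} d:((A \to (B \to C)) \to ((A \to B) \to (A \to C)))$ and the line just derived yields
\[
T \vdash_{\SE} (d \cdot (c \cdot t)):((A \to B) \to (A \to C)),
\]
and a third application, this time using the hypothesis $T \vdash_{\SE} s:(A \to B)$, gives
\[
T \vdash_{\SE} ((d \cdot (c \cdot t)) \cdot s):(A \to C).
\]

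Finally, I would invoke $\axiom{am}$ exactly in the manner of the example the authors give right after listing the axioms: pick a fresh variable $w$, and apply $\axiom{jv}$ to an appropriate instance of $\axiom{am}$ to conclude that the outer formula is equivalent to the one obtained by replacing $d \cdot (c \cdot t)$ by $(d \cdot c) \cdot t$. Combining with classical propositional reasoning rewrites the term to $((d \cdot c) \cdot t) \cdot s$, which is $d \cdot c \cdot t \cdot s$ under left-associative reading of $\cdot$, yielding the claim.

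The hypothesis that $T$ is axiomatically appropriate is not used directly in the argument; it is the background assumption that makes the existence of the constants $c$ and $d$ (stated in the lemma) natural, via the internalization lemma applied to the two propositional tautologies that $c$ and $d$ justify. The only mildly delicate point is bookkeeping the schematic instantiation of $\axiom{j}$ and the $\axiom{jv}$-substitutions of variables $x, y$ by the intended terms at each step; this is purely routine, and I do not foresee any real obstacle.
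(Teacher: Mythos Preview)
Your proposal is correct and follows exactly the approach the paper intends: the paper's entire proof reads ``The proof is encoded as $(d \cdot (c \cdot t)) \cdot s$. The constants $c$ and $d$ exist because~$T$ is axiomatically appropriate,'' and you have simply unpacked this one-line sketch into the three successive uses of $\axiom{j}$ plus the final associativity rewrite via $\axiom{am}$. Your remark on the role of axiomatic appropriateness also matches the paper's reading of the ``where'' clause.
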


\begin{proof}
	The proof is encoded as $(d \cdot (c \cdot t)) \cdot s$.  The constants $c$ and $d$ exist because~$T$ is axiomatically appropriate. 
\end{proof}

\begin{definition}
Let $T$ be an axiomatically appropriate theory and suppose 
\[
T \vdash_{\SE} s:(A \to B) \text{ and } T \vdash_{\SE} t:(B \to C).
\]
We define $syl(s,t)$ such that 
\begin{equation}\label{eq:syl:1}
T \vdash_{\SE} syl(s,t):(A \to C)
\end{equation}
and  $syl(s,t)$ is the least term (according to some given fixed ordering on terms) that satisfies~\eqref{eq:syl:1}.
\end{definition}

\begin{remark}
Lemma~\ref{syllemma} guarantees the existence of $syl(s,t)$.
Note that the term $syl(s,t)$ depends on the formulas $A,B$ and $C$. This dependency disappears if the theory $T$ is schematic.
\end{remark}
For the rest of this section we denote by $d_n$ a term such that for all formulas $A_1,...,A_n$ and $1 \leq i \leq n$
\begin{equation}\label{eq:dn:1}
T \vdash_{\SE} d_n:(A_i \to A_1 \vee ... \vee A_n).
\end{equation}

\begin{lemma}\label{orlemma}
	Let $T$ be an axiomatically appropriate theory and $n \in \mathbb{N}_{>0}$.\\
	Assume there exists $d_n$.\\
	If $T \vdash_{\SE} s_i:(A_i \to B)$ for all $i$, then for an arbitrary $t \in \Tm$
	\[T \vdash_{\SE} t:A_i \to syl(d_n, e_n \cdot s_1 \cdot ... \cdot s_n) \cdot t :B,\]
	where $T \vdash_{\SE} e_n:((A_1 \to B) \to (... \to ((A_n \to B) \to (A_1 \vee ... \vee A_n \to B))...))$.
\end{lemma}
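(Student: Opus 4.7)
The plan is to strip the hypotheses off $e_n$ one by one using axiom \textbf{j}, then glue the resulting term to $d_n$ via $syl$, and finally apply \textbf{j} one last time.

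First I would show, by an induction on $k$ running from $0$ to $n$, that
\[
T \vdash_{\SE} e_n \cdot s_1 \cdot \ldots \cdot s_k : \bigl((A_{k+1} \to B) \to \ldots \to ((A_n \to B) \to (A_1 \vee \ldots \vee A_n \to B))\ldots\bigr).
\]
The base case $k = 0$ is the hypothesis on $e_n$. For the step, I take axiom \textbf{j}, use \textbf{jv} to substitute $x$ by $e_n \cdot s_1 \cdot \ldots \cdot s_k$ and $y$ by $s_{k+1}$, instantiate $A$ by $A_{k+1} \to B$ and $B$ by the remaining implication, and then apply \textbf{MP} twice with the induction hypothesis and the given $s_{k+1}:(A_{k+1} \to B)$. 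For $k = n$ this yields
\[
T \vdash_{\SE} e_n \cdot s_1 \cdot \ldots \cdot s_n : (A_1 \vee \ldots \vee A_n \to B).
\]

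Next, by assumption $T \vdash_{\SE} d_n:(A_i \to A_1 \vee \ldots \vee A_n)$, so Lemma~\ref{syllemma} and the definition of $syl$ give
\[
T \vdash_{\SE} syl(d_n,\, e_n \cdot s_1 \cdot \ldots \cdot s_n) : (A_i \to B).
\]

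Finally, applying axiom \textbf{j} one more time, via \textbf{jv} substituting $x$ with $syl(d_n,\, e_n \cdot s_1 \cdot \ldots \cdot s_n)$, instantiating $A$ by $A_i$ and $B$ by $B$, and discharging the first premise by \textbf{MP} with the justification just obtained, produces exactly
\[
T \vdash_{\SE} t:A_i \to syl(d_n,\, e_n \cdot s_1 \cdot \ldots \cdot s_n) \cdot t : B,
\]
which is the claim.

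Conceptually nothing deep happens here: the lemma just chains together \textbf{j}, the definition of $syl$, and Lemma~\ref{syllemma}. The only real care is in the bookkeeping, namely matching the left-associative reading of the display $e_n \cdot s_1 \cdot \ldots \cdot s_n$ to the order in which antecedents are consumed by iterated applications of \textbf{j}, so that the term produced at each inductive step is literally $(e_n \cdot s_1 \cdot \ldots \cdot s_k) \cdot s_{k+1}$ rather than some other bracketing. Once that is pinned down, the proof is purely mechanical.
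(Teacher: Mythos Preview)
Your proof is correct and follows essentially the same route as the paper: iterate axiom \textbf{j} with \textbf{MP} to peel off the antecedents of $e_n$, invoke $syl$ with $d_n$, and finish with one more application of \textbf{j}. The only cosmetic omission is that in the final step you should also substitute $y$ by $t$ via \textbf{jv} (not just $x$), but otherwise your argument matches the paper's, with more explicit bookkeeping.
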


\begin{proof}
The existence of $e_n$ follows from the internalization property.
By using \textbf{j} and \textbf{MP} $n$ times we get $T \vdash_{\SE} e_n \cdot s_1 \cdot ... \cdot s_n : (A_1 \vee ... \vee A_n \to B)$. Applying the $syl$-function constructed in \Cref{syllemma} gives $T \vdash_{\SE} syl(d_n, e_n\cdot s_1\cdot ...\cdot s_n):(A_i \to B)$ for all $i$. From \textbf{j} as \[syl(d_n, e_n\cdot s_1\cdot ...\cdot s_n):(A_i \to B) \to (t:A_i \to syl(d_n, e_n\cdot s_1\cdot ...\cdot s_n)\cdot t :B)\] we infer $T \vdash_{\SE} t:A_i \to syl(d_n, e_n \cdot s_1 \cdot ... \cdot s_n) \cdot t :B$ for all $i$. 
\end{proof}

In the definition of $d_n$ the brackets of the disjunction are missing, because they don't matter: If $d_n$ exists for one specific placement of the brackets then so does $e_n$ and we can apply the previous lemma.

The next lemma is an easy consequence of axiom \axiom{j}. Note that the existence of the constant $c_n$ follows from $A_1 \to (... \to (A_n \to A_1 \wedge ... \wedge A_n)...))$ being an instance of  \axiom{CL} and $T$ being axiomatically appropriate.
\begin{lemma}\label{andlemma}
	Let $T$ be an axiomatically appropriate theory and $n \in \mathbb{N}_{>0}$. Then \[T \vdash_{\SE} t_1:A_1 \wedge ... \wedge t_n:A_n \to c_n \cdot t_1 \cdot ... \cdot t_n:(A_1 \wedge ... \wedge A_n),\]
	where $T \vdash_{\SE} c_n:(A_1 \to (... \to (A_n \to A_1 \wedge ... \wedge A_n)...))$.
\end{lemma}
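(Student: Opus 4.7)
The plan is to iterate axiom \axiom{j} exactly $n$ times, each iteration stripping off one antecedent from the formula justified by $c_n \cdot t_1 \cdots t_i$, and then finish with a single step of propositional logic to convert the resulting nested implication into a conjunction.

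Set $B_i := A_{i+1} \to \ldots \to A_n \to (A_1 \wedge \ldots \wedge A_n)$, so that the hypothesis of the lemma gives $T \vdash_\SE c_n:(A_1 \to B_1)$. An instance of \axiom{j}, obtained from the axiom scheme via \axiom{jv} (substituting $x/c_n$ and $y/t_1$, with the inner $A,B$ set to $A_1$ and $B_1$), reads
\[
T \vdash_\SE c_n:(A_1 \to B_1) \to \bigl( t_1:A_1 \to c_n \cdot t_1 : B_1 \bigr),
\]
and \axiom{MP} yields $T \vdash_\SE t_1:A_1 \to c_n \cdot t_1 : B_1$. For the inductive step, assume we have already derived
\[
T \vdash_\SE t_1:A_1 \to \ldots \to t_i:A_i \to c_n \cdot t_1 \cdots t_i : B_i.
\]
Because $B_i = A_{i+1} \to B_{i+1}$, instantiating \axiom{j} through \axiom{jv} with $x/(c_n \cdot t_1 \cdots t_i)$ and $y/t_{i+1}$ gives
\[
c_n \cdot t_1 \cdots t_i : (A_{i+1} \to B_{i+1}) \to \bigl( t_{i+1}:A_{i+1} \to c_n \cdot t_1 \cdots t_{i+1} : B_{i+1} \bigr),
\]
and straightforward propositional chaining extends the induction hypothesis by one more antecedent. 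Note that the left-associative parsing of $c_n \cdot t_1 \cdots t_{i+1}$ matches exactly the application produced on the right-hand side of \axiom{j}.

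After $n$ iterations I reach
\[
T \vdash_\SE t_1:A_1 \to \ldots \to t_n:A_n \to c_n \cdot t_1 \cdots t_n : (A_1 \wedge \ldots \wedge A_n),
\]
and a final instance of \axiom{CL}, namely the equivalence between $P_1 \to \ldots \to P_n \to Q$ and $P_1 \wedge \ldots \wedge P_n \to Q$, combined with \axiom{MP}, delivers the claim. The proof is essentially bookkeeping: the only thing to keep track of is the shrinking formula $B_i$ and the left-associative parse of the iterated product, so there is no real obstacle.
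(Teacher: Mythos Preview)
Your argument is correct and is exactly the approach the paper has in mind: the paper merely states that the lemma ``is an easy consequence of axiom \axiom{j},'' and your proof spells out precisely this iteration of \axiom{j} (via \axiom{jv}) $n$ times followed by the propositional step from nested implication to conjunction. There is nothing to add.
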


For a multiset $\Gamma$ and a variable $x$ define $x:\Gamma := \{x:A \mid A \in \Gamma\}$.
Further we define terms $x^k$ recursively by $x^0:=1$ and $x^{k+1}:=x^{k} \cdot x$. 
Remember that by axiom \axiom{am}, we have associativity of the application operation. Therefore, and by axiom \axiom{a1}, we may use, e.g., the term $c \cdot (( 1 \cdot x)  \cdot x)$ where one would expect  $(c \cdot x)  \cdot x$.

\begin{lemma}\label{rlemma}
	Let $T$ be an axiomatically appropriate theory, $n \in \mathbb{N}_{>0}$, and assume that $d_n$ exists as in~\eqref{eq:dn:1}. 
	Further assume that for all $i$ such that $1 \leq i \leq n$,  we are given 
	multisets $\Gamma_i \subseteq \Fml$ with $|\Gamma_i|=m$ such that
     $T \vdash_{\SE} \bigwedge \Gamma_i \to A$.\\
	Then there exists $q \in \Tm$ such that  for each $\Gamma_i$ we have $T \vdash_{\SE} \bigwedge x: \Gamma_i \to q:A$ where $x$ is a variable.
\end{lemma}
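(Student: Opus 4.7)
The plan is to combine internalization with Lemmas~\ref{orlemma} and~\ref{andlemma}.

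First, for each $i \in \{1,\dots,n\}$ I would invoke the Internalization Lemma (enabled by axiomatic appropriateness of~$T$) to obtain a ground term $t_i$ with $T \vdash_\SE t_i : (\bigwedge \Gamma_i \to A)$. Then Lemma~\ref{orlemma} applied with $A_i := \bigwedge \Gamma_i$, $B := A$, $s_i := t_i$---whose hypothesis on~$d_n$ is precisely what the present lemma assumes---yields a single term
\[r := syl(d_n,\; e_n \cdot t_1 \cdots t_n)\]
such that, for every $u \in \Tm$ and every $i$,
\[T \vdash_\SE u : \bigwedge \Gamma_i \to r \cdot u : A.\]

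Next, Lemma~\ref{andlemma} gives, for each $i$, a constant $c_m^{(i)}$ with $T \vdash_\SE c_m^{(i)} : (B_1^{(i)} \to \cdots \to B_m^{(i)} \to \bigwedge \Gamma_i)$. Using $m$ applications of axiom~\axiom{j}, with the associativity of~$\cdot$ furnished by \axiom{am} and \axiom{a1}, I would derive $T \vdash_\SE \bigwedge x : \Gamma_i \to c_m^{(i)} \cdot x^m : \bigwedge \Gamma_i$. Instantiating $u := c_m^{(i)} \cdot x^m$ in the displayed implication above then produces, for each individual~$\Gamma_i$, the $i$-dependent witness $q_i := r \cdot c_m^{(i)} \cdot x^m$ satisfying $T \vdash_\SE \bigwedge x : \Gamma_i \to q_i : A$.

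The last and hardest step is to replace the family $(q_i)_i$ by a single term~$q$ that works for all~$i$ at once. I would try $q := r \cdot (c_m^{(1)} + \cdots + c_m^{(n)}) \cdot x^m$ and use the distributivity axioms \axiom{dl}, \axiom{dr} to rewrite the formula $q : A$ into a sum of the form $\sum_i r \cdot c_m^{(i)} \cdot x^m : A$, then appeal to \axiom{j+}. The main obstacle is that \axiom{j+} demands \emph{each} summand $r \cdot c_m^{(k)} \cdot x^m : A$ to be derivable under \emph{every} hypothesis $\bigwedge x : \Gamma_i$, whereas the direct derivation only covers the case $k = i$. Overcoming this will likely require a second invocation of Lemma~\ref{orlemma}, applied to suitably composed versions of the $c_m^{(i)}$'s so that they all present as justifications of implications with common target~$A$; the resulting uniform witness, applied to $x^m$, then yields the required~$q$.
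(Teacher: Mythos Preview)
Your first two steps coincide with the paper's proof: internalize each $\bigwedge\Gamma_i\to A$ to obtain ground terms $s_i$, then apply Lemma~\ref{orlemma} (using the assumed $d_n$) to get the single term $r=syl(d_n,e_n\cdot s_1\cdots s_n)$ with $T\vdash_\SE u:\bigwedge\Gamma_i\to r\cdot u:A$ for every term $u$ and every~$i$.

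The divergence is in your handling of Lemma~\ref{andlemma}. The paper uses a \emph{single} constant $c_m$---not an $i$-indexed family $c_m^{(i)}$---so that $T\vdash_\SE\bigwedge x{:}\Gamma_i\to c_m\cdot x^m:\bigwedge\Gamma_i$ holds for all~$i$ simultaneously, and then sets $q:=r\cdot c_m\cdot x^m$. That is the entire proof; there is no ``hardest step''. Your concern that the constant from Lemma~\ref{andlemma} might depend on the particular formulas in $\Gamma_i$ is formally reasonable under bare axiomatic appropriateness, but the paper follows the same notational convention it used for $d_n$ and $e_n$: the subscript records only the arity. In the one place this lemma is invoked (Lemma~\ref{r2lemma}), $T$ is additionally assumed schematic, which makes the uniformity of $c_m$ genuine.

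Your proposed merge $q=r\cdot(c_m^{(1)}+\cdots+c_m^{(n)})\cdot x^m$ does not work, and you already see why: after distributing via \axiom{dl}/\axiom{dr}, axiom \axiom{j+} requires \emph{each} summand $r\cdot c_m^{(k)}\cdot x^m:A$ to hold under the hypothesis $\bigwedge x{:}\Gamma_i$, which you only have for $k=i$. The suggested ``second invocation of Lemma~\ref{orlemma}'' is too vague to rescue this---that lemma outputs terms of the shape $(\text{ground})\cdot t$ and does not manufacture a common $c_m$. The right move is simply to take $c_m$ as uniform (as the paper does), after which your argument is complete and identical to the paper's.
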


\begin{proof}
	By internalization we find  ground terms $s_i$ such that 
	\[
	T \vdash_{\SE} s_i:(\bigwedge \Gamma_i \to A).
	\]
	Because $T$ is axiomatically appropriate and we assume that $d_n$ exists, we can use \Cref{orlemma} and get for an arbitrary variable $x$ that
	\[T \vdash_{\SE} c_m \cdot x^m:\bigwedge\Gamma_i \to syl(d_n, e_n \cdot s_1 \cdot ... \cdot s_n) \cdot c_m \cdot x^m:A.\]\\
	By \Cref{andlemma} we have $T \vdash_{\SE} \bigwedge x:\Gamma_i \to c_m \cdot x^m:\bigwedge \Gamma_i$ for all $i$.
	This leads to \[T \vdash_{\SE} \bigwedge x:\Gamma_i \to syl(d_n, e_n \cdot s_1 \cdot ... \cdot s_n) \cdot c_m \cdot x^m:A.\]\\
	Therefore $q = syl(d_n, e_n \cdot s_1 \cdot ... \cdot s_n) \cdot c_m \cdot x^m$.
\end{proof}

In the next definition, it is essential that both implications are justified by the same term. An axiomatically appropriate and schematic theory does not guarantee this.

\begin{definition}
A theory $T$ \emph{supports weakening} if  there exists a ground term $t$ such that for all formulas $A,B$
\[
t:(A \to A \lor B) \in T \quad\text{and}\quad t:(B \to A \lor B) \in T .
\]
\end{definition}
Note that supporting weakening is rather natural. For instance, it corresponds to accepting the $R\lor$ rule in Gentzen system $\mathsf{G3}$, see, e.g.~\cite{ts2000}. By this rule, we can infer from the (multi-)set $\{A,B\}$ both $A \lor B$ and $B \lor A$.
Thus from $A$ we get by weakening admissibility $\{A,B\}$ and thus both $A \lor B$ and $B \lor A$ by exactly the same reasoning. Therefore, the justification term representing this should also be the same.

\begin{definition}
	$A \in \Fml$ is a balanced disjunction of depth 0. If $A$ and $B$ are balanced disjuctions of depth $m$, then $A \vee B$ is a balanced disjunction of depth $m+1$.
\end{definition}

For Lemma~\ref{orlemma} and Lemma~\ref{rlemma} we assumed that a term~$d_n$ exists. Next we show that such terms do exist (for $n$ being a power of 2) if we work with a schematic theory and balanced disjunctions. This leads to the formulation of Lemma~\ref{r2lemma}, which is the same as  Lemma~\ref{rlemma} without the extra assumption about the term~$d_n$, but with a schematic theory that supports weakening. This provides the crucial step in the proof of the realization theorem.

\begin{lemma}\label{r2lemma}
	Let $T$ be an axiomatically appropriate and schematic theory that supports weakening and let $n \in \mathbb{N}_{>0}$.
	We assume that for all $i$ with $1 \leq i \leq n$,  we are given 
	multisets $\Gamma_i \subseteq \Fml$ with $|\Gamma_i|=m$ such that
     $T \vdash_{\SE} \bigwedge \Gamma_i \to A$.\\
	Then there exists $q \in \Tm$ such that  for each $\Gamma_i$  we have $T \vdash_{\SE} \bigwedge x: \Gamma_i \to q:A$, where $x$ is a variable.
\end{lemma}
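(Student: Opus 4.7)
The plan is to reduce \Cref{r2lemma} to \Cref{rlemma} by explicitly constructing the term $d_n$ that \Cref{rlemma} takes as an assumption; once such a term exists, the conclusion is immediate. First I would pad: if $n$ is not a power of $2$, pick $N = 2^k$ with $N \geq n$ and repeat $\Gamma_1$ to extend the list to $\Gamma_1,\ldots,\Gamma_N$. The hypothesis $T \vdash_{\SE} \bigwedge \Gamma_i \to A$ persists for the repeated entries, and any term $q$ that works for the longer list works for the original one as well.

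Next, define terms $\delta_k$ recursively by $\delta_0 := c$ for some schematic constant $c$ with $c:(X \to X) \in T$ for every formula $X$, and $\delta_{k+1} := syl(\delta_k, t)$, where $t$ is the ground term supplied by the supports-weakening assumption. By induction on $k$, I would show that for the balanced disjunction $D_k := \bigvee_{i=1}^{2^k} \bigwedge \Gamma_i$ (bracketed so that each level splits the disjuncts in half) one has $T \vdash_{\SE} \delta_k:(\bigwedge \Gamma_i \to D_k)$ for every $i \in \{1,\ldots,2^k\}$. The base case is immediate from the choice of $c$. For the step, write $D_{k+1} = E \vee F$ with $E,F$ balanced of depth $k$ over the two halves; the induction hypothesis, applied separately to $E$ and to $F$, supplies the same $\delta_k$ sending each $\bigwedge \Gamma_i$ into whichever of $E,F$ contains it, and composing via \Cref{syllemma} with $t:(E \to E \vee F)$ or $t:(F \to E \vee F)$ yields $\delta_{k+1}$. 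Setting $d_N := \delta_k$ and feeding it together with the internalised proofs $s_i$ of $\bigwedge \Gamma_i \to A$ into \Cref{rlemma} then produces the required $q$.

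The critical subtlety, and the main obstacle, is the uniformity of $\delta_k$: the \emph{same} term must work across both halves of every split and across every choice of $\bigwedge \Gamma_i$. This is exactly what schematicness of $T$ supplies, because (as the remark following the definition of $syl$ points out) the formula-dependence of $syl(s,t)$ vanishes in schematic theories; schematicness also licenses choosing $c$ as a single constant justifying every instance of $X \to X$ at the base. Likewise, supports-weakening is essential because it delivers one ground term $t$ that justifies the weakening step in both branches simultaneously; without this, the two sides of the recursive step would yield distinct composite terms and the induction would collapse. The balanced-tree bookkeeping and the padding by repetition of $\Gamma_1$ are routine once this uniformity is in place.
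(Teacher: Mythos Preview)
Your proposal is correct and follows essentially the same approach as the paper: construct the term $d_{2^k}$ by recursion via $syl$ applied to the supports-weakening term, rely on schematicness to make $syl$ formula-independent so that one term serves both halves at each split, then pad the list of $\Gamma_i$'s out to a power of $2$ and invoke \Cref{rlemma}. The only cosmetic difference is that you pad first and build $d$ afterwards, whereas the paper builds the $d_{2^l}$'s for arbitrary formulas first and pads at the end; your explicit remark that schematicness is needed already at the base case (to get a single constant for all instances of $X\to X$) is in fact slightly more careful than the paper's appeal to internalization alone.
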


\begin{proof}
We first show by induction that  for all $l \in \mathbb{N}$ terms $d_{2^l}$ exists such that for all formulas  $A_1,\ldots,A_{2^l}$
\begin{equation*}
T \vdash_{\SE} d_{2^l}:(A_i \to A_1 \vee \cdots \vee A_{2^l}),
\end{equation*}
     i.e.~the terms from~\eqref{eq:dn:1} exist for $n=2^l$.
	
	For the base case we note that term $d_1$ exists by the internalization property (this requires $T$ to be axiomatically appropriate).
	
	For the induction step, first observe that the term $d_2$ exists since $T$ supports weakening. 
	Now suppose 
	\[
	d_l:(A_i \to A_1 \vee ... \vee A_l) \text{ where $1 \leq i \leq l$}
	\]
	and 
	\[
	d_l:(A_i \to A_{l+1} \vee ... \vee A_{2l}) \text{ where $l+1 \leq i\leq 2l$}.
	\]
	Then $syl(d_l, d_2)$ serves as $d_{2l}$ because of
	\[
	d_2:(A_1 \vee ... \vee A_l \to (A_1 \vee ... \vee A_l) \vee  (A_{l+1} \vee ... \vee A_{2l}))
	\] 
	and 
	\[
	d_2:(A_{l+1} \vee ... \vee A_{2l} \to (A_1 \vee ... \vee A_l) \vee  (A_{l+1} \vee ... \vee A_{2l})). 
	\]
	The term  $syl(d_l, d_2)$ does not depend on the formulas $A_1,...,A_{2l}$ because $T$ is schematic.
	Therefore schematicness and the property of supporting weakening of $T$ imply the existence of $d_{2^l}$ for all $l \in \mathbb{N}$, where the disjunction is balanced of depth $l$.
	
	To finish the proof, we
	define $k \in \mathbb{N}$ such that $2^{k-1} < n \leq 2^k$ and then $\Gamma_i := \Gamma_1$ for $n+1 \leq i \leq 2^k$. Now we can apply \Cref{rlemma} and get $q \in \Tm$ such that $T \vdash_{\SE} \bigwedge x:\Gamma_i \to q:A$ for all $i \leq 2^k$, including all $i \leq n$. 
\end{proof}

In order to prove a realization theorem, we need a notion to relate different occurences of $\Box$ in a $\GK$-derivation. The main definition will be that of an essential family of $\Box$-occurences, which goes back to~\cite{Art01BSL} (for some examples see~\cite{justificationLogic2019}).

An instance of a $\GK$-rule relates to formulas $F$ and $G$ if either 
\begin{enumerate}
\item
the rule instance does not transform $F$ and $F=G$ or 
\item
$G$ results from $F$  in the application of the rule instance 
\end{enumerate}
For example in 
\[
\axiom{$(\supset \to)$}   \dfrac{A,\Gamma \supset \Delta, B}{\Gamma \supset \Delta, A \to B} 
\]
the formula $A$ in the premise is related to the formula $A \to B$ in the conclusion.

Let $\mathcal{D}$ be a derivation in $\GK$. We say that two occurrences of $\Box$ in $\mathcal{D}$ are \emph{related} if they occur at the same position in related formulas of premises and conclusions of a rule instance in $\mathcal{D}$;\footnote{that is, e.g., an occurrence of $\Box$ in the premise $A$ in the above instance of $(\supset \to)$ is related the the occurrence of $\Box$ at the same position in the subformula $A$ of $A\to B$ in the conclusion} we close this relationship of related occurrences under transitivity.

All occurrences of $\Box$ in $\mathcal{D}$ naturally split into disjoint \emph{families} of related occurrences. Note that the rules of $\GK$ preserve the polarity of related occurrences. Thus, all occurrences in a given family have the same polarity and we speak of \emph{positive} and \emph{negative families}, respectively. 

We call a family \emph{essential} if at least one of its members is introduced on the right-hand side of a $(\Box)$ rule.

\begin{theorem}[Realization]
	Let $T$ be a theory that is axiomatically appropriate, schematic and that supports weakening.\\ Then there exists a realization $r$ such that for all formulas $A$ of the language of modal logic, we have $\vdash_K A \Rightarrow T \vdash_{\SE} r(A)$.
\end{theorem}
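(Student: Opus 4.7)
The plan is to follow Kuznets' modification of Artemov's constructive realization procedure~\cite{Kuz09PLS}, which is tailored to systems whose $+$-axiom is formulated with $\wedge$ rather than $\vee$, and therefore applies to $\SE$. By soundness and completeness of $\GK$ for $\K$, any $\K$-theorem $A$ has a cut-free $\GK$-derivation $\mathcal{D}$ of $\supset A$. Compute in $\mathcal{D}$ the families of related $\Box$-occurrences and classify them as positive/negative and essential/non-essential.

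Replace every negative occurrence of $\Box$ in $\mathcal{D}$ by a distinct fresh justification variable (so that the resulting $r$ is normal) and every essential positive family by a fresh placeholder variable $x_\ell$; non-essential positive families can be replaced by $0$. Then walk bottom-up through $\mathcal{D}$, maintaining for each sequent $\Gamma \supset \Delta$ visited the invariant that its annotated version is derivable in $\SE$ in the sense of Definition~\ref{def:seqreal:1}, i.e.\ $T \vdash_{\SE} \bigwedge r(\Gamma) \to \bigvee r(\Delta)$. The axioms of $\GK$ become propositional tautologies; the propositional, weakening and contraction rules are handled by straightforward propositional manipulation together with the substitution principle (Remark on \axiom{jv}) and Lemma~\ref{eot}.

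The crucial case is the $(\Box)$ rule $\Gamma \supset A / \Box\Gamma \supset \Box A$. All applications of $(\Box)$ in $\mathcal{D}$ whose principal $\Box$ belongs to the same essential family~$\ell$ yield, by the induction hypothesis, derivations $T \vdash_{\SE} \bigwedge r(\Gamma_j) \to A_j$ (after padding the antecedent multisets and weakening the succedents to a common size and common conclusion). Applying Lemma~\ref{r2lemma}, in which the assumptions of axiomatic appropriateness, schematicness and support for weakening are all used, we obtain a single term $q_\ell$ such that $T \vdash_{\SE} \bigwedge x_\ell : r(\Gamma_j) \to q_\ell : A$ holds uniformly in $j$. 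We then substitute $x_\ell$ by $q_\ell$ globally via rule \axiom{jv}. After all essential families have been processed from the leaves down to the root, the accumulated substitution determines a realization $r$ with $T \vdash_{\SE} r(A)$.

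The main obstacle, and the reason for the detour through Lemma~\ref{r2lemma}, is that without $+$-based accumulation one cannot close up distinct realizations of the same family at each use of $(\Box)$; instead one must commit, once and for all, to a single term that works for every application of $(\Box)$ in a given essential family. Schematicness guarantees that the auxiliary terms $syl(s,t)$ and $d_n$ produced by Lemmas~\ref{syllemma} and~\ref{r2lemma} are formula-independent, so the same $q_\ell$ realizes every occurrence in the family, and support for weakening is precisely what lifts the base term $d_2$ to the balanced disjunctions $d_{2^l}$ needed to aggregate arbitrarily many branches. These three properties of $T$ thus combine to make the single-substitution realization strategy go through.
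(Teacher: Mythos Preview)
Your overall architecture matches the paper's: work in a $\GK$-derivation, classify $\Box$-families, and for each essential family invoke Lemma~\ref{r2lemma} to produce a single term that realizes every $(\Box)$-application in that family. But there is a genuine gap in how you handle the \emph{negative} occurrences.

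You propose to replace each negative $\Box$ by a \emph{distinct} fresh variable so that the resulting realization is normal. This is precisely what Lemma~\ref{r2lemma} does \emph{not} allow: its conclusion is $T \vdash_{\SE} \bigwedge x:\Gamma_i \to q:A$ with a \emph{single} variable $x$ prefixing every formula of every $\Gamma_i$, and the constructed $q$ literally contains the factor $c_m\cdot x^m$. If the antecedent formulas of the several $(\Box)$-instances in a family carry pairwise different variables $y_1,\ldots,y_m$ (and, worse, different tuples of variables for different $i$), you can no longer form one term $q$ that works uniformly for all $i$; the product $c_m\cdot y_1\cdots y_m$ depends on the particular instance. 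Your line ``$T \vdash_{\SE} \bigwedge x_\ell : r(\Gamma_j) \to q_\ell : A$'' betrays the confusion: $x_\ell$ was introduced as the placeholder for the \emph{positive} family, yet here it suddenly plays the role of the variable attached to the negative antecedent formulas.

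The paper's proof resolves this by assigning the \emph{same} variable $x$ to all negative families (and to all non-essential positive families), which is exactly the hypothesis under which Lemma~\ref{r2lemma} is stated. Consequently, the realization produced by this theorem is explicitly \emph{not} normal; the paper says so immediately after the proof and proves a separate Normal Realization theorem under a stronger hypothesis on $T$ (namely the availability of a constant $c$ with $x{:}A \to c\cdot x\cdot y{:}(A\lor B)\in T$ and symmetrically), which lets one simulate the traditional disjunctive $+$ and run Artemov's original algorithm. So either drop the normality claim and use a single variable for all negative occurrences, or switch to the second theorem's stronger assumption; as written, your Step~2 and your invocation of Lemma~\ref{r2lemma} are incompatible.
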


\begin{proof}
	Let $\mathcal{D}$ be the $\GK$ derivation that proves $\supset A$. The realization $r$ is constructed in three steps:
	\begin{enumerate}
		\item Modify the derivation. For each essential family $f$ do the following: 
		If $n$ $(\Box)$ rules introduce a $\Box$-operator to $f$, their premises are $\Gamma_i \supset A$, $1 \leq i \leq n$, and none of the $\Gamma_i$ is empty then use first $(w\supset)$ to duplicate formulas of $\Gamma_i$ such that all $\Gamma_i$ have the same cardinality. After applying the $(\Box)$ rule remove the duplicates by $(c\supset)$.
		\item For each negative family and each non-essential positive family, replace all $\Box$ occurrences by the variable $x$.
		\item For each essential family $f$ do the following: Enumerate the $(\Box)$ rules from $1$ to $n$. The premises are $\Gamma_i \supset A$, where $1\leq i\leq n$. Step 1 guarantees either $|\Gamma_1| = ... = |\Gamma_n| =: m$ or $\Gamma_k = \emptyset$ for some $k$. In the first case construct a term $q$ according to \Cref{r2lemma}. In the second case a term $q$ can be found by the internalization property. Replace each $\Box$ of $f$ by $q$.
	\end{enumerate}
	We call the resulting derivation after these three steps $\mathcal{D}'$. Now we prove by induction on the $GK$-derivation that all sequents in $\mathcal{D}'$ are derivable in $\SE$ from the theory $T$.
	\begin{enumerate}
		\item $P\supset P$: $r(P \to P) = P \to P$ which is derivable is $\SE$.
		\item $\perp \supset$: $r(\perp \to \perp) = \perp \to \perp$ which is derivable is $\SE$.
		\item $(\Box)$: The case without empty $\Gamma_i$'s is covered in \Cref{r2lemma}. In the other case the premise is $\Gamma_j \supset A$. The I.H. for $\Gamma_k = \emptyset$ is $T \vdash_{\SE} r(A)$. Because the term for the introduced $\Box$ was constructed according to the internalization property this implies $T \vdash_{\SE} r(\Box A)$. By propositional reasoning we infer $T\vdash_{\SE} r(\bigwedge \Box \Gamma_i) \to r(\Box A)$ for all $i$, therefore $T \vdash_{\SE} r(\bigwedge \Box \Gamma_i \to \Box A)$ for all $i$.
		\item For the remaining rules the desired result is obtained by propositional reasoning. \qedhere
	\end{enumerate}
\end{proof}

However, the realization obtained by the previous theorem will not be normal.
In traditional justification logic normal realizations can be achieved using the 
sum-operation, which there (unlike in $\SE$) is axiomatized by 
\[
s:A \lor t:A \to s+t:A.
\]

Since we work with general theories (instead of simple constant specifications) and with variables that are interpreted universally, we can mimick the traditional sum-operation and perform the usual realization procedure given in~\cite{Art01BSL,justificationLogic2019}.
\begin{theorem}[Normal realization]
Let $T$ be an axiomatically appropriate and schematic theory such that for some constant~c
\[
x:A \to c\cdot x \cdot y: (A\lor B) \in T  \quad\text{and}\quad y:B \to c\cdot x \cdot y: (A\lor B)  \in T.
\]
Then there exists a normal realization~$r$ such that for all modal formulas $F$,
\[
\vdash_{\K} F \quad\text{implies}\quad T \vdash_{\SE} r(F).
\]
\end{theorem}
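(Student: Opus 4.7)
The plan is to mimic the traditional constructive realization procedure of~\cite{Art01BSL,justificationLogic2019}, using the constant $c$ to simulate the traditional sum operation on justification terms.

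First, I would exhibit a genuine pseudo-sum on terms. Instantiating the two hypothesis formulas with $B := A$ gives
\[
T\vdash_{\SE} s:A \to c\cdot s\cdot t:(A\lor A) \qquad\text{and}\qquad T\vdash_{\SE} t:A \to c\cdot s\cdot t:(A\lor A)
\]
via rule \axiom{jv}. Axiomatic appropriateness together with schematicness yields a constant $c'$ with $c':(A\lor A\to A) \in T$ uniformly in $A$; axiom \axiom{j} and \axiom{MP} then lift the previous formulas to
\[
T\vdash_{\SE} s:A \to (s\oplus t):A \qquad\text{and}\qquad T\vdash_{\SE} t:A \to (s\oplus t):A,
\]
where $s\oplus t := c'\cdot (c\cdot s\cdot t)$. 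The operation $\oplus$ therefore plays exactly the role that $+$ plays in the traditional Logic of Proofs: either argument alone suffices to justify $A$.

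With $\oplus$ in hand, the standard Artemov normal realization algorithm transcribes essentially verbatim. Given a $\GK$-derivation $\mathcal{D}$ of $\supset F$, first assign a distinct fresh justification variable to every negative occurrence of $\Box$. Then proceed by induction over $\mathcal{D}$, building realization terms for positive families: at each $(\Box)$ rule instance $\Gamma\supset A \,/\, \Box\Gamma\supset \Box A$ that introduces a $\Box$ into an essential positive family $f$, internalize the derivation of the premise to obtain a candidate term $q$ such that, once the formulas of $\Gamma$ are justified by their already-assigned realization terms, $q:A$ is derivable by axiom \axiom{j} and \axiom{MP}. If $f$ contains several such rule instances yielding candidates $q_1,\ldots,q_n$, set the realization of every occurrence of $\Box$ in $f$ (throughout $\mathcal{D}$) to $q_1\oplus\cdots\oplus q_n$, left-associated; by the pseudo-sum property above, whichever $q_i$ would justify $A$ at its point of introduction, the combined term does too. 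Non-essential positive families can be filled in by any suitable ground term obtained from internalization.

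Correctness is verified by induction on $\mathcal{D}$: axioms are trivial, propositional rules go through by classical reasoning, and each $(\Box)$-step is handled by the construction above, using the pseudo-sum property at the merge. Because each negative $\Box$ has been replaced by its own fresh variable, the resulting realization $r$ is normal by construction. The main obstacle is the same global bookkeeping as in the classical proof: all $(\Box)$-rule candidates for a given essential family must be collected before its realization term is written down, so that one consistent term replaces every member of the family across $\mathcal{D}$. Compared to the preceding (non-normal) realization theorem, the detour through $d_{2^l}$, $e_n$, and balanced disjunctions disappears, because $\oplus$ is a direct binary merge and no balanced-disjunction workaround is required.
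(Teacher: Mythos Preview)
Your proposal is correct and follows essentially the same route as the paper. Both proofs build a binary pseudo-sum $s\oplus t$ (the paper writes $s*t:=u\cdot c\cdot s\cdot t$, you write $c'\cdot(c\cdot s\cdot t)$) by composing the hypothesis constant~$c$ with a uniform justifier of $A\lor A\to A$ obtained from axiomatic appropriateness plus schematicness, and then defer to Artemov's classical normal realization procedure with $\oplus$ in the role of the traditional $+$. The one point the paper makes explicit that you leave implicit is that the substitution of final terms for provisional variables across~$\mathcal{D}$ is licensed in $\SE$ by rule~\axiom{jv}; since the deduction theorem fails here (Remark~\ref{rem:dedTH:1}), this is the mechanism that makes the global bookkeeping you allude to actually go through.
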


\begin{proof}
From the assumptions we get \[T \vdash_{\SE} s:A \to c \cdot s \cdot t:(A \vee A) \text{ and } T \vdash_{\SE} t:A \to c \cdot s \cdot t:(A \vee A).\] The internalization property delivers a term $u$, such that $T \vdash_{\SE} u:(A \vee A \to A)$. Applying \textbf{j} yields $T \vdash_{\SE} c \cdot s \cdot t:(A \vee A) \to u \cdot c \cdot s \cdot t:A$. Therefore we have \[T \vdash_{\SE} s:A \to u \cdot c \cdot s \cdot t:A \text{ and } T \vdash_{\SE} t:A \to u \cdot c \cdot s \cdot t:A\] and finally 
\[
T \vdash_{\SE} s:A \vee t:A \to u \cdot c \cdot s \cdot t:A.
\]
We can define the plus from traditional justification logic as $s * t := u \cdot c \cdot s \cdot t$. Because $T$ is schematic, there is one $u$ justifying all instances of $A \vee A \to A$, which ensures that $s*t$ doesn't depend on $A$.
Therefore Artemov's realization procedure~\cite{Art01BSL} can be used. We only need a small adjustment in the case of the ($\Box$) rule, which we show next.

Let an occurrence of a ($\Box$) rule have number $i$ in the enumeration of all $n_f$ ($\Box$) rules in a given family $f$. The corresponding node in the $\GK$ derivation $\mathcal{D}'$ is labelled by
\[\dfrac{B_1,...,B_n \supset A}{x_1:B_1,...,x_n:B_n \supset s_1*...*s_{n_f}:A}\]
where the $x$'s are variables, the $s$'s are terms and $s_i$ is a provisional variable. By the induction hypothesis we have
\[T \vdash_{\SE} B_1 \wedge ... \wedge B_n \to A.\]
It can be shown by induction on the derivation of $A$ that there exists a term $t$ such that
\[T \vdash_{\SE} x_1:B_1 \wedge ... \wedge x_n:B_n \to t:A.\]
Thus
\[T \vdash_{\SE} x_1:B_1 \wedge ... \wedge x_n:B_n \to s_1*...*s_{i-1}*t*s_{i+1}*...*s_{n_f}:A.\]
Substitute $t$ for $s_i$ everywhere in $\mathcal{D}'$. The built-in substitution property \textbf{jv} ensures that this doesn't affect the already established derivability results.
\end{proof}

\section{Applications}

The semiring interpretation of evidence has a wide range of applications. Many of them require a particular choice of the semiring.  The following are of particular interest to us (see also~\cite{Green:2017}):
\begin{itemize}
\item
$V = ([0,1],\max,\cdot,0,1)$ is called the Viterbi semiring. We can think of the elements of $V$ as \emph{confidence scores} and use them to model trust.
\item
$T = (\mathbb{R}^\infty_{+},\min,+,\infty,0)$ is called the tropical semiring. This is connected to \emph{shortest path problems}. In the context of epistemic logic, we can employ this semiring to model the costs of obtaining knowledge. Among other things, this might provide a fresh perspective on the logical omniscience problem, related to the approaches in~\cite{ArtKuz06CSL,ArtKuz09TARK,ArtKuz14APALnonote}.
\item
$P = (\mathcal{P}(S),\cup,\cap,\emptyset,S)$ is called the powerset lattice (semiring).
This is closely related to the recently introduced subset models for justification logic~\cite{StuderLehmannSubsetModel2019,StuderLehmannSubsetModelLFCS2020,exploringSM}.
This semiring can be used to model probabilistic evidence and aggregation thereof, see, e.g.,~\cite{Art16}.  
\item
$F = ([0,1],\max,\max(0,a+b-1),0,1)$ is called the  \L{}ukasiewicz semiring.  We can use it to model fuzzy evidences. Ghari~\cite{Ghari2016} provides a first study of fuzzy justification logic that is based on this kind of operations for combining evidence. 
\end{itemize}

Another stream of possible applications emerges from the fact that terms with variables represent actual functions.  If the underlying semiring is $\omega$-continuous, then the induced polynomial functions are $\omega$-continuous and, therefore, monotone~\cite{Kuich:1997}. Hence, they have least and greatest fixed points. Thus it looks very promising to consider this kind of semirings to realize modal fixed point logics like common knowledge. 

Common knowledge of a proposition $A$ is a fixed point of $\lambda X.(\mathsf{E}A \land \mathsf{E}X)$. There are justification logics with common knowledge available~\cite{Art06TCS,BucKuzStu11JANCL} but their exact relationship to modal common knowledge is still open.

We believe that non-wellfounded and cyclic proof systems~\cite{Afshari2017,baston/capretta:2018,BucKuzStu10M4M,Studer2008} are the right proof-theoretic approach to settle this question. In those systems, proofs can be regarded as fixed points and hence justifications realizing those cyclic proofs will be fixed points in a semiring. For this purpose, making use of formal power series, which might be thought of as infinite polynomials, look very promising.
First results in this direction have been obtained by Shamkanov~\cite{Shamkanov2016} who presents a realization procedure for G\"odel-L\"ob logic based on cyclic proofs.

\subsection*{Ackowledgements}
We thank the anonymous reviewers of the present journal version as well as the reviewers of the CLAR conference version of this paper for many helpful comments and suggestions for improvement.
We also thank Vladimir Krupski for noticing a bug in an earlier version of this paper.
We are grateful to Daniyar Shamkanov for the discussion about cyclic proofs and realization.
This research is supported by the Swiss National Science Foundation grant
200020\_184625.

\end{document}